\def\frak{\mathfrak}
\def\Bbb{\mathbb}
\def\Cal{\mathcal}
\let\phi\varphi
\newcommand{\x}{\times}
\renewcommand{\o}{\circ}
\newcommand{\al}{\alpha}
\newcommand{\be}{\beta}
\newcommand{\ep}{\epsilon}
\newcommand{\la}{\lambda}
\newcommand{\ph}{\phi}
\newcommand{\ps}{\psi}
\newcommand{\si}{\sigma}
\newcommand{\ze}{\zeta}
\newcommand{\Ga}{\Gamma}
\newcommand{\La}{\Lambda}
\newcommand{\Om}{\Omega}
\newcommand{\Up}{\Upsilon}
\def\Rho{\mbox{\textsf{P}}}
\newcommand{\barm}{\overline{M}}
\newcommand{\vol}{\operatorname{vol}}
\newcommand{\im}{\operatorname{im}}
\newcommand{\Ric}{\operatorname{Ric}}
\numberwithin{equation}{section}
\newcounter{theorem}
\numberwithin{theorem}{section}
\newtheorem{thm}[theorem]{Theorem}
\newtheorem*{thm*}{Theorem \thesubsection}
\newtheorem{lemma}[theorem]{Lemma}
\newtheorem{prop}[theorem]{Proposition}
\newtheorem*{lemma*}{Lemma \thesubsection}
\newtheorem*{prop*}{Proposition \thesubsection}
\newtheorem*{cor*}{Corollary \thesubsection}
\theoremstyle{definition}
\newtheorem{definition}[theorem]{Definition}
\newtheorem*{definition*}{Definition \thesubsection}
\newtheorem*{example*}{Example \thesubsection}
\theoremstyle{remark}
\newtheorem{remark}[theorem]{Remark}
\newtheorem*{remark*}{Remark \thesubsection}
\def\sideremark#1{\ifvmode\leavevmode\fi\vadjust{\vbox to0pt{\vss
 \hbox to 0pt{\hskip\hsize\hskip1em
 \vbox{\hsize3cm\tiny\raggedright\pretolerance10000
  \noindent #1\hfill}\hss}\vbox to8pt{\vfil}\vss}}}%
\begin{document}
\renewcommand{\today}{}
\title{Projective Compactifications\\ and Einstein metrics}

\author{Andreas \v Cap and A.\ Rod Gover}

\address{A.\v C.: Faculty of Mathematics\\
University of Vienna\\
Oskar--Morgenstern--Platz 1\\
1090 Wien\\
Austria\\
A.R.G.:Department of Mathematics\\
  The University of Auckland\\
  Private Bag 92019\\
  Auckland 1142\\
  New Zealand;\\
Mathematical Sciences Institute\\
Australian National University \\ ACT 0200, Australia} 
\email{Andreas.Cap@univie.ac.at}
\email{r.gover@auckland.ac.nz}

\begin{abstract}
  For complete affine manifolds we introduce a definition of
  compactification based on the projective differential geometry
  (i.e.\ geodesic path data) of the given connection. The definition
  of projective compactness involves a real parameter $\alpha$ called
  the order of projective compactness. For volume preserving
  connections, this order is captured by a notion of volume
  asymptotics that we define. These ideas apply to complete
  pseudo-Riemannian spaces, via the Levi-Civita connection, and thus
  provide a notion of compactification alternative to conformal
  compactification. For many orders $\alpha$, we provide an asymptotic
  form of a metric which is sufficient for projective compactness of
  the given order, thus also providing many local examples.

  Distinguished classes of projectively compactified geometries of
  orders one and two are associated with Ricci-flat connections and
  non--Ricci--flat Einstein metrics, respectively. Conversely, these
  geometric conditions are shown to force the indicated order of
  projective compactness. These special compactifications are shown to
  correspond to normal solutions of classes of natural linear PDE
  (so-called first BGG equations), or equivalently holonomy reductions
  of projective Cartan/tractor connections. This enables the
  application of tools already available to reveal considerable
  information about the geometry of the boundary at infinity. Finally,
  we show that metrics admitting such special compactifications always
  have an asymptotic form as mentioned above.
\end{abstract}

\maketitle

\subjclass{MSC2010: Primary 53A20, 53B21, 53B10; Secondary 53C29, 35N10, 54D35, 53A30}

\pagestyle{myheadings} \markboth{\v Cap, Gover}{Projective
  compactifications} 

\thanks{Both authors gratefully acknowledge support from the Royal
  Society of New Zealand via Marsden Grants 10-UOA-113 and 13-UOA-018;
  A\v C gratefully acknowledges support by project P23244-N13 of the
  ``Fonds zur F\"orderung der wissenschaftlichen For\-schung'' (FWF)
  and also the hospitality of the University of Auckland. }

\section{Introduction}\label{1}

Let $\barm$ be an $(n+1)$-dimensional manifold with boundary $\partial
M$. A defining function $r$ for the boundary is a smooth function on
$\barm$, with zero locus $\partial M$, and such that $dr$ is nowhere
vanishing on $\partial M$. Recall that a pseudo-Riemannian metric
$g^o$ on the interior $M$ of $\barm$ is said to be conformally compact
if $g:=r^2 g^o$ extends to a pseudo--Riemannian metric on $\barm$,
where $r$ is a defining function for the boundary; this extension
meaning that $g$ is smooth and non-degenerate up to the boundary. At
points where the boundary conormal is not null, the restriction of $g$
determines a conformal structure on $\partial M$ (that is independent
of the choice of $r$) and the conformal compactification provides a
geometric framework for relating conformal geometry, and associated
field theories, to the asymptotic phenomena of the interior
(pseudo-)\-Riemannian geometry of one higher dimension.  This notion
had its origins in the work of Newman and Penrose, for treating four
dimensional spacetime physics, and has remained extremely important
for general relativity and related questions
\cite{Chrusciel,Fr,Friedrich,LeBrunH,Penrose125}. 
Conformally compact geometries have also proved
to be a powerful tool for conformal invariant theory
\cite{FGast,FGbook}, the geometric scattering program and
related analysis \cite{GrZ,GuillamouAB,Ma-hodge,Melrose,Vasy}, and the
AdS/CFT program from physics
\cite{AdSCFTreview,deHaro,Henningson,Witten}.

Considering other geometric compactifications of complete metrics
should be very useful for many of these directions. Here we develop an
effective version of this idea based on projective geometry, thereby
also extending the concept to manifolds endowed with a complete affine
connection. Recall that torsion free connections are projectively
equivalent if they share the same geodesics up to reparametrisation;
the resulting emphasis on the role of geodesics seems particularly
natural for general relativity and related geometric analysis. The
resulting compactifications exhibit substantial differences from
conformal compactifications. This can for example be seen from the
description of the natural projective compactification of Minkowski
space in Section \ref{3.5}, see in particular Remark
\ref{rem3.5b}. More information on relations and differences between
projective and conformal compactifications can be found in Remark
\ref{rem2.3}.

In special settings projective compactifications have arisen in the
literature.  Indeed in Chapter 4 of Fefferman and Graham's book
\cite{FGbook} a ``projectively compact metric'' is given by an
explicit formula linked to the formula for their ``ambient metric''.
Einstein and Ricci flat projectively compact structures are seen to
arise naturally via suitable holonomy reductions of the canonical
Cartan/tractor connection \cite{ageom,CGM}; each such reduction is
equivalent to a special, so-called normal, solution of a certain
overdetermined PDE, a point we shall take up below. These results
follow a similar story for Einstein conformally compact metrics
\cite{Go-al}, and are part of a fascinating very general picture
\cite{hol-red}.  Extending a classical theme, recently there has been
increased interest in the interaction between projective and
pseudo-Riemannian geometries including in the context of general
relativity \cite{BalMatveev,BDE,EastwoodNotes,HL,HL2,Mikes,NurMet}. We
also see the current work as adding to this developing picture.

Let us summarize the program to be followed and the results
obtained. An affine connection on $M$ is projectively compact if its
projective class extends to the boundary in a suitable way. This is
made precise in Definition \ref{def2.1}, and in this a
parameter $\alpha\in \mathbb{R}_+$ is involved. For connections
preserving a volume density the number $\alpha$ controls the volume
growth asymptotics: smaller values of $\alpha$ are associated with
larger volume growth near the boundary, see Proposition \ref{prop2.2}.
The notion of volume asymptotics is defined in a general
context in Definition \ref{def2.2}, since this enables rather general
comparison; for example between the volume growth of a projective
compactification and the usual conformal compactification.

In Section \ref{2.1a} we treat completeness. It shown that for
$\al\leq 2$, projectively compact connections are automatically
complete, and essentially the converse holds. This is the content of
Proposition \ref{prop2.1a}.

A Levi-Civita connection can be projectively compact and hence we come
to the notion of projective compactness of a pseudo-Riemannian metric
in Definition \ref{def2.3}. A main result of the article is Theorem
\ref{thm2.3} which describes an asymptotic form of a metric depending
on $\alpha\in (0,2]$ which, for many values of $\al$, is sufficient for
  projective compactness of order $\alpha$. This Theorem provides a
  class of projectively compact metrics which can be used and treated
  in a manner similar to conformally compact metrics. In particular,
  it provides a large number of local examples of such metrics.

There is an alternative interpretation of volume growth, based on the
concept of defining densities, which is crucial for the second part of
the article. The line bundle $\mathcal{E}(\alpha)$ of densities of
projective weight $\alpha$ is defined in Section \ref{2.2}. A
``defining density'' $\sigma$ for the boundary is a section of such a
density bundle having $\partial M$ as its zero locus, and with the derivative
of $\si$  nowhere zero along $\partial M$. If an affine connection
admitting a parallel volume density 
is projectively compact of order
$\alpha$ then one obtains a defining density $\si\in \Gamma(
\mathcal{E} (\alpha))$ (unique up to constant multiples) for the
boundary.

It is thus natural to single out particularly nice projective
compactifications by imposing projectively invariant differential
equations on the natural defining density.  In particular there are
canonical overdetermined linear equations (called first BGG equations)
available in the case that $\alpha=1$ and $\alpha=2$
\cite{ageom,CSS}. These equations are rather well studied, in
particular it is known that there is a subclass of so--called normal
solutions which correspond to reductions of projective holonomy,
i.e.~the holonomy of the canonical Cartan/tractor connection
associated to the projective structure. Using the available results we
obtain the following picture: 
\begin{itemize}
\item On densities of weight 1 the BGG equation is second order and
  the natural defining density $\si\in \Gamma (\mathcal{E}(1))$ of a
  projectively compact space $(M,\nabla)$ satisfies this equation, if
  and only if $\nabla$ is Ricci flat. Furthermore the boundary
   $\partial M$ is totally geodesic
  and inherits a natural projective structure. The solution $\si$ in
  this case is necessarily ``normal'' thus giving rise to a reduction
  of projective holonomy on $\barm$, see Theorem \ref{thm3.2a}.
  
\item On densities of weight 2 the BGG equation is third order and for
  $\tau\in \Gamma (\mathcal{E} (2))$ a nontrivial solution means that,
  for the corresponding connection on $M$, the symmetrized covariant
  derivative of the Ricci curvature vanishes. If the solution is
  normal and satisfies a suitable non-degeneracy condition, then we
  get a non--Ricci--flat Einstein metric on $M$, with Levi--Civita
  connection in the projective class and a conformal structure on the
  boundary, see Theorem \ref{thm3.3a}. In Proposition \ref{prop3.3},
  we show that in the latter case the metric has asymptotics of the
  form discussed in Theorem \ref{thm2.3} for $\alpha=2$.
\end{itemize}

 The relation between the geometric conditions of Ricci--flatness,
 respectively being non--Ricci--flat Einstein, and the order of
 projective compactness exhibited in the above results is of a deep
 and fundamental nature; it is not due to any choice being made
 concerning the equations on the defining functions. This is shown by
 the following strong converses to the above results. For these
 results, we assume that for $\barm=M\cup\partial M$ we have an affine
 connection $\nabla$ on $M$ with the following properties: it
 preserves a volume density, it does not extend to any neighborhood of
 a boundary point, but its projective class does extend to all of
 $\barm$. Under these assumptions, we prove in Theorems \ref{thm3.2b}
 and \ref{thm3.3b}:
\begin{itemize}
\item If $\nabla$ is Ricci flat then it is projectively compact of
  order $\al=1$. Moreover, the unique (up to constant factors)
  non--zero section of $\Cal E(1)$ which is parallel for $\nabla$
  extends to a defining density for $\partial M$ which solves the
  relevant first BGG equation. Thus one automatically is in the
  setting of Theorem \ref{thm3.2a}.

\item If $\nabla$ is the Levi--Civita connection of a non--Ricci--flat
  Einstein metric (or, equivalently, an affine connection with
  parallel, non--degenerate Ricci curvature), then it is projectively
  compact of order $\al=2$. Moreover, the unique (up to constant
  factors) non--zero section of $\Cal E(2)$ which is parallel for
  $\nabla$ extends to a defining density for $\partial M$ which solves
  the first BGG equation on $\Cal E(2)$. Thus one automatically is in
  the setting of Theorem \ref{thm3.3a}.
\end{itemize}

Our last main result concerns projectively compact Ricci flat metrics
of any signature $(p,q)$, the model example of which is discussed in
Section \ref{3.4}. From the earlier results already mentioned, we know
that the order of projective compactness has to be one, there is a
natural defining density in $\Ga(\Cal E(1))$ which solves the first
BGG equation, and that the boundary $\partial M$ inherits a projective
structure. As an additional ingredient we use the fact that the metric
determines a normal solution to the projective metricity equation
(which again is a first BGG equation). Using this, we prove:
\begin{itemize}

\item The natural projective structure on the boundary inherits a
  reduction of projective holonomy to the orthogonal group $SO(p,q)$
  (see Theorem \ref{thm3.5}). This means that the boundary is {\em
    projective almost Einstein} which by \cite{ageom,hol-red} implies
  a stratification of $\partial M$, called a ``curved orbit
  decomposition'', which is explicitly described in Theorem
  \ref{thm3.6}. The open curved orbits are Einstein (never Ricci-flat)
  of signature $(p-1,q)$, respectively $(p,q-1)$, depending on whether
  the boundary points are the limits of space--like or time--like
  geodesics. The closed curved orbits consist of limit points of
  interior null geodesics and inherit a conformal structure of
  signature $(p-1,q-1)$.

\item In Proposition \ref{prop3.7} we show that, locally around the points of
  $\partial M$ which lie in open curved orbits, the interior metric
  has asymptotics of the form discussed in Theorem \ref{thm2.3} with
  $\alpha=1$.
\end{itemize}

\section{Projective compactifications}\label{2}

\subsection{Projectively compact affine connections}\label{2.1} 
Throughout this article smooth will mean $C^\infty$, we will refer to
linear connections on the tangent bundle on a manifold as
\textit{affine connections} and all such connections will be assumed
to be torsion free. Recall that two such connections are called
\textit{projectively equivalent}, if and only if they have the same
geodesics up to paramerization. Equivalently, their contorsion tensor
can be expressed in terms of a one--form $\Up$ in the form
$$
\hat\nabla_{\xi}\eta=\nabla_\xi\eta+\Up(\xi)\eta+\Up(\eta)\xi,
$$ 
where $\xi$, and $\eta$ are tangent vector fields.
We will formally write this relation as $\hat\nabla=\nabla+\Up$ from
now on. 

In the setting of a manifold $\barm$ of dimension $n+1$ with boundary
$\partial M$ and with interior $M$, the basic question we study in
this article is the following. Suppose we have a connection $\nabla$
on $M$ which does not extend to $\barm$, for example because it is
complete. Can we projectively modify it to a connection $\hat\nabla$,
which extends to $\barm$? Motivated by the concept of conformal
compactifications of Riemannian metrics in a similar setting, we
formulate this in terms of local defining functions $\rho$ for the
boundary. So we assume that $\rho$ is a smooth, real valued function
defined on some open subset $U\subset\barm$, with non--negative values,
such that $\rho^{-1}(\{0\})=\partial M\cap U$, and such that $d\rho$ is
nowhere vanishing on $\rho^{-1}(\{0\})$. We will be mainly interested
in the following condition in the cases $\al=2$ and $\al=1$, the other
cases are included for completeness.

\begin{definition}\label{def2.1}
Let $\al$ be a positive real number.  An affine connection $\nabla$ on
$M$ is called \textit{projectively compact} of order $\al\in
\mathbb{R}_+$ if for any $x\in\partial M$, there is a neighborhood $U$
of $x$ in $\barm$ and a defining function $\rho:U\to\Bbb R$ for the
boundary such that the connection
\begin{equation}\label{pc-def}
\hat\nabla=\nabla+\tfrac{d\rho}{\al\rho}
\end{equation}
on $U\cap M$ extends to all of $U$, i.e.~for arbitrary vector fields
$\xi$ and $\eta$ which are smooth up to the boundary also
$\hat\nabla_\xi\eta$ is smooth up to the boundary. 
\end{definition}

Observe first that for $\hat\nabla$ extending to $U$, it suffices to
require smoothness of $\hat\nabla_\xi\eta$ up to the boundary for the
elements of a local frame which is smooth up to the boundary or
equivalently smoothness of the Christoffel symbols of $\hat\nabla$ in
some local chart up to the boundary. Moreover, in case $\hat\nabla$
extends, the extension is uniquely determined by $\nabla$ and $\rho$
since $U\cap M$ is dense in $U$.

Given one defining function $\rho$, any other defining function for
the boundary can be locally written as $\tilde\rho=e^f\rho$ for some
smooth function $f$ on $U$. One immediately computes that
$\tfrac{d\tilde\rho}{\al\tilde\rho}=\tfrac{d\rho}{\al\rho}+\tfrac{1}\al
df$. Thus the question whether $\hat\nabla$ extends to the boundary is
actually independent of the defining function $\rho$. Note however
that the parameter $\al$ cannot be eliminated in a straightforward
way, since this would amount to replacing $\rho$ by some power of
$\rho$, which then cannot be a defining function.

\subsection{Volume asymptotics}\label{2.2}
If a linear connection $\nabla$ on $M$ is projectively compact of any
order $\al$ as in Definition \ref{def2.1}, then the projective
structure on $M$ defined by $\nabla$ extends to $\barm$, and this
extension is evidently unique. We next want to show that, for
connections admitting a parallel volume density, apart from the
extension of the projective structure, projective compactness of order
$\al$ amounts to a fixed growth rate of the volume towards the
boundary.

Recall that on a manifold $N$ endowed with a projective structure,
there is a standard notion of \textit{projective densities}. For any
real number $w\in\Bbb R$, one has the bundle $\Cal E(w)$ of densities
of \textit{projective weight} $w$. These bundles are always trivial,
but there is no preferred trivialization. Since they can be defined as
associated bundles to the linear frame bundle, any connection $\nabla$
in the projective class induces linear connections on all density
bundles which will be denoted by the same symbol. If
$\hat\nabla=\nabla+\Up$ for $\Up\in\Om^1(M)$ in the sense introduced
in Section \ref{2.1}, then the induced connections on $\Cal E(w)$ are
related by
\begin{equation}\label{denstrans}
\hat\nabla_\xi \si=\nabla_\xi\si+w\Up(\xi)\si \quad
\text{for\ }\si\in\Ga(\Cal E(w)),\xi\in\frak X(M). 
\end{equation}
This easily implies (see e.g.~\cite{CGM}) that given an arbitrary
nowhere vanishing section of $\Cal E(w)$ with $w\neq 0$, there is a
unique connection in the projective class for which this section is
parallel. In this situation, we will call the nowhere vanishing
section a \textit{scale} and refer to the resulting connection as
\textit{the connection determined by that scale}.

In order to allow comparison to the case of conformal structures, for
which there also is an established convention, we put this into a more
general context.  Recall that on a general (possibly non--oriented)
smooth manifold $N$ of dimension $n+1$ there is a natural line bundle
whose sections can be canonically integrated. This can be defined as
an associated bundle to the linear frame bundle of $N$ and if $N$ is
orientable, a choice of orientation identifies this bundle with the
bundle $\La^{n+1}T^*N$ of $(n+1)$--forms, see \cite[Section
  10]{Michor}. We will call this the bundle $\vol(N)$ of
\textit{volume densities} (avoiding the common name ``$1$--densities''
which might lead to confusion with the conventions mentioned
above). As above, any linear connection $\nabla$ on $TN$ induces a
linear connection on the line bundle $\vol(N)$, which we will denote
by the same symbol. We shall call the connection $\nabla$
\textit{special} if there is a non--zero section of $\vol(N)$ which is
parallel for the induced connection. Such a section then is clearly
uniquely determined up to a constant factor. For this (or its roots as
appropriate) we may use the term ``the canonical density'' determined
by a special affine connection. This slight abuse of language should
cause no confusion.

From the construction of the bundle of volume densities it follows
that $\vol(N)$ is always a trivial bundle, but there is no canonical
trivialization. Thus one can form powers of this bundle with any real
number as an exponent; this is easily established via the language of
associated bundles. The relation to projective densities then can be
simply expressed as $\Cal E(w)=\vol(N)^{-w/(n+2)}$ if $\dim(N)=n+1$. 

Returning to our standard setting, this allows us to define the notion
of volume asymptotics for special linear connections on the interior.
\begin{definition}\label{def2.2} 
Let $\barm$ be a smooth manifold with boundary $\partial M$ and
interior $M$, and let $\nabla$ be a special affine connection on
$M$. Then $\nabla$ is said to have \textit{volume asymptotics of order
  $\be$} if and only if for each point $x_0\in\partial M$, there is an
open neighborhood $U$ of $x_0$ in $\barm$, a local defining function
$\rho$ for $\partial M$, and a nowhere vanishing section $\nu$ of
$\vol(\barm)$ over $U$ such that the section $\rho^{-\be}\nu|_{U\cap
  M}$ of $\vol(M)$ is parallel for $\nabla$.
\end{definition}

Note that the number $\beta$ is independent of the choice of defining
function $\rho$, and corresponding section $\nu$.  There is an
alternative interpretation of volume asymptotics which will be very
useful for our purposes. This is based on the notion of defining
densities. Given a section $\nu$ of $\vol(\barm)$, one can naturally
view $\nu^{r}$ as a section of $\vol(\barm)^{r}$ for any $r\in\Bbb
R$. In particular, a choice of non--vanishing section of $\vol(\barm)$
gives rise to non--vanishing sections of all powers and for a special
affine connection on $M$ there are parallel sections of all density
bundles. For any fixed bundle these are unique up to constant
multiples.

Now there is a well defined notion of a defining density for $\partial
M$. Saying that $\si\in\Ga(\vol(\barm)^r)$ is a defining density means
that the zero locus of $\si$ coincides with $\partial M$ and with
respect to some (or equivalently any) local trivialization of
$\vol(\barm)^r$ around points in $\partial M$, $\si$ is represented by
a defining function. Otherwise put, for any locally non--vanishing
section $\hat\si$ of $\vol(\barm)^{r}$, the uniquely defined function
$\rho$ such that $\si=\rho\hat\si$ must be a defining function for
$\partial M$. The crucial point to notice here is that this pins down
$r$. If $\si$ is a defining density, then no power $\si^t$ for $t\neq
1$ can be a defining density for its zero locus.

We are now ready to clarify the relation between the order of
projective compactness and volume asymptotics.

\begin{prop}\label{prop2.2}
Let $\barm$ be a smooth $n+1$--dimensional manifold with boundary
$\partial M$ and with interior $M$.

(i) If $\nabla$ is a special linear connection on $M$, which is
projectively compact of order $\al>0$, then it has volume
  asymptotics of order $(n+2)/\al$. Any non--zero section of $\Cal
  E(\al)$, which is parallel for $\nabla$ extends by $0$ to a defining
  density for the boundary $\partial M$.

(ii) Suppose that $\barm$ is endowed with a projective structure, and
  that $\si\in\Ga(\Cal E(\al))$ is a defining density for $\partial
  M$. Then one can view $\si$ as a scale for the restriction of the
  projective structure to $M$ and the affine connection $\nabla$ on
  $M$ determined by this scale is projectively compact of order $\al$.
\end{prop}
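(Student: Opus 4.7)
The plan rests on a single identity. Applying the transformation formula \eqref{denstrans} with $\Up = d\rho/(\al\rho)$ on the bundle $\Cal E(\al)$, the weight $\al$ cancels the $\al$ in the denominator, yielding
$$
\hat\nabla_\xi\tau = \nabla_\xi\tau + \tfrac{d\rho(\xi)}{\rho}\tau \qquad\text{for\ } \tau\in\Ga(\Cal E(\al)).
$$
This shows that on the weight-$\al$ bundle the map $\tau\mapsto\rho^{-1}\tau$ intertwines $\nabla$-parallel sections on $M$ with $\hat\nabla$-parallel sections, and this intertwining is the algebraic heart of both directions.

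For (i), fix $x\in\partial M$ with a defining function $\rho$ on a neighborhood $U$ such that $\hat\nabla := \nabla + d\rho/(\al\rho)$ extends smoothly to $U$. Since $\nabla$ is special and $\al\neq 0$, the identification $\Cal E(\al)\cong\vol(M)^{-\al/(n+2)}$ supplies a $\nabla$-parallel $\tau\in\Ga(\Cal E(\al)|_M)$ (unique up to a constant). The identity above, combined with $\nabla\tau = 0$, gives $\hat\nabla(\rho^{-1}\tau) = 0$ on $U\cap M$. To extend $\rho^{-1}\tau$ across $\partial M$, I would shrink $U$ to be simply connected, choose any smooth nowhere-vanishing $\hat\si_0\in\Ga(\Cal E(\al)|_U)$, and write $\hat\nabla\hat\si_0 = \theta\otimes\hat\si_0$ for a smooth $\theta\in\Om^1(U)$. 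Existence of a parallel section on the dense subset $U\cap M$ forces $d\theta = 0$, hence $\theta = -du$ for some smooth $u$ on $U$, and $e^u\hat\si_0$ is a smooth, nowhere-vanishing, $\hat\nabla$-parallel section on all of $U$. Matching constants, $\rho^{-1}\tau = C e^u\hat\si_0$ on $U\cap M$, so $\tau = C\rho\, e^u\hat\si_0$ extends smoothly to $U$ with zero locus $\partial M\cap U$; this is precisely a defining density of weight $\al$. Raising to the power $-(n+2)/\al$ turns $\tau$ into a $\nabla$-parallel section $\rho^{-(n+2)/\al}\nu$ of $\vol(M)$, with $\nu := (\rho^{-1}\tau)^{-(n+2)/\al}$ smooth and nowhere vanishing on $U$; this is volume asymptotics of order $\be = (n+2)/\al$.

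For (ii), given a defining density $\si\in\Ga(\Cal E(\al))$, write $\si = \rho\hat\si$ on $U$ with $\hat\si$ a smooth nowhere-vanishing section of $\Cal E(\al)$. On $M$, $\si$ is a scale determining the stated $\nabla$; on all of $U$, $\hat\si$ is a scale determining a connection $\hat\nabla$ smooth up to the boundary. Writing $\hat\nabla = \nabla + \Up$ on $U\cap M$, and using $\hat\nabla\hat\si = 0$, $\nabla\si = 0$ together with \eqref{denstrans},
$$
d\rho\otimes\hat\si = \hat\nabla(\rho\hat\si) = \hat\nabla\si = \nabla\si + \al\,\Up\otimes\si = \al\rho\,\Up\otimes\hat\si,
$$
which forces $\Up = d\rho/(\al\rho)$. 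Hence $\nabla + d\rho/(\al\rho) = \hat\nabla$ is the required smooth extension on $U$.

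The only non-routine point is the extension argument in (i): a $\hat\nabla$-parallel section on the dense interior has to be shown to extend smoothly across $\partial M$. This works because a smooth line-bundle connection that admits a parallel section on a dense open set is automatically flat on the whole neighborhood, after which the standard primitive argument on a simply connected $U$ produces the smooth global parallel extension. Everything else reduces to the identity above, the density transformation law \eqref{denstrans}, and the correspondence $\Cal E(w) = \vol^{-w/(n+2)}$.
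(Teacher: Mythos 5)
Your proposal is correct and follows essentially the same route as the paper's proof: the central computation that $\si/\rho$ is parallel for $\hat\nabla=\nabla+\tfrac{d\rho}{\al\rho}$ (your intertwining identity, which is \eqref{denstrans} with $w=\al$) drives part (i), and the scale comparison $\hat\nabla\si=\si\tfrac{d\rho}{\rho}$ forcing $\Up=\tfrac{d\rho}{\al\rho}$ drives part (ii), exactly as in the paper. The only deviation is that you spell out the extension of the $\hat\nabla$--parallel section across $\partial M$ (flatness of the line-bundle connection forced on the dense set $U\cap M$, then the Poincar\'e lemma on a simply connected, half-ball $U$), a step the paper asserts without proof --- this is added detail rather than a different method, and it is sound.
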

\begin{proof}
(i) Fix a local defining function $\rho:U\to\Bbb R_{\geq 0}$ for
  $\partial M$ and let $\si$ be a nonzero section of $\Cal E(\al)\to
  M$ which is parallel for $\nabla$. Then we consider the section
  $\hat\si:=\si/\rho$ of $\Cal E(\al)$ which is defined and nowhere
  vanishing on $U\cap M$. By assumption, the connection
  $\hat\nabla=\nabla+\frac{d\rho}{\al\rho}$ extends to the
  boundary. Then from the definition of $\hat\si$, formula
  \eqref{denstrans} and $\nabla\si=0$ we get
$$
\nabla\hat\si=\nabla\tfrac{\si}{\rho}=-\tfrac{d\rho}{\rho^2}\si+
\tfrac{1}{\rho}\nabla\si=-\tfrac{d\rho}{\rho}\hat\si. 
$$ Hence the non--zero section $\hat\si$ is parallel for $\hat\nabla$
over $U\cap M$, so it extends to a parallel section for $\hat\nabla$
on all of $U$, which is nowhere vanishing. But then $\si=\hat\si\rho$
shows that $\si$ extends by zero to a defining density. The statement
on volume asymptotics then follows immediately by forming powers of
order $-(n+2)/\al$.

(ii) As a defining density for $\partial M$, $\si$ is nowhere vanishing
on $M$ and thus determines a connection $\nabla$ in the projective
class there. For a point $x_0\in\partial M$ choose an open
neighborhood $U$ and a nowhere vanishing section $\hat\si$ of $\Cal
E(\al)$ defined over $U$. Let $\hat\nabla$ be the unique connection in
the projective class on $U$ such that $\hat\nabla\hat\si=0$. Since
$\si$ is a defining density for $\partial M$, there is a defining
function $\rho:U\to\Bbb R_{\geq 0}$ for $\partial M$ such that
$\si=\hat\si\rho$. Since $\hat\si$ is parallel for $\hat\nabla$, we
get $\hat\nabla\si=\hat\si d\rho=\si \frac{d\rho}{\rho}$ over $U\cap
M$. But then \eqref{denstrans} shows that $\si$ is parallel on $U\cap
M$ for the connection $\hat\nabla+\frac{-d\rho}{\al\rho}$, which thus
has to coincide with $\nabla$.
\end{proof}

\subsection{Completeness}\label{2.1a}
Next we derive a result related to (geodesic) completeness of an
affine connection $\nabla$ which is projectively compact of some order
$\al$. By the definition of this property, the projective class of
such a connection $\nabla$ extends to all of $\barm$. In particular,
we have distinguished paths on all of $\barm$ which are the geodesic
paths of the connections in the class. We will show that, provided $\al\leq
2$, paths approaching the boundary $\partial M$ transversally do not
reach the boundary in finite time when parameterized as geodesics for
$\nabla$. Motivated by this result, we will restrict to the case
$\al\leq 2$ from now on.

\begin{prop}\label{prop2.1a}
Let $\nabla$ be an affine connection on $M$ which is projectively
compact of some order $\al\leq 2$. Suppose that one has a geodesic
path which reaches $\partial M$ in a point $x_0$ with tangent
transversal to $\partial M$. Then a part of this path can be
parameterized as a geodesic for $\nabla$ in the form
$c:[0,\infty)\to\barm$ in such a way that $c([0,\infty))\subset M$ and
    $\lim_{t\to\infty}c(t)=x_0$.
\end{prop}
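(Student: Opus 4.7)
The plan is to use the projective compactness to re-parameterize the given path as an affinely parameterized geodesic for the smooth extension $\hat\nabla$, then to re-parameterize back to a $\nabla$-geodesic by integrating an elementary ODE, and finally to observe that the hypothesis $\al\leq 2$ is exactly the integrability criterion that makes the $\nabla$-affine time to reach $\partial M$ infinite. Near $x_0$ I would choose a defining function $\rho$ so that $\hat\nabla:=\nabla+\tfrac{d\rho}{\al\rho}$ extends smoothly up to $\partial M$. Standard ODE theory for the smooth connection $\hat\nabla$ then yields an affinely parameterized $\hat\nabla$-geodesic $\tilde c:(-\ep,\ep)\to\barm$ with $\tilde c(0)=x_0$ and $\tilde c'(0)$ transversal to $\partial M$; after possibly reversing the parameter I may arrange that $\tilde c(s)\in M$ for $s\in(0,\ep)$, and then $r(s):=\rho(\tilde c(s))$ satisfies $r(0)=0$ and $r'(0)>0$. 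As an unparameterized curve the given geodesic path must agree with $\tilde c$.

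Next I would look for a reparameterization $c(t)=\tilde c(s(t))$ satisfying $\nabla_{\dot c}\dot c=0$. Using $\hat\nabla=\nabla+\Up$ with $\Up=d\rho/(\al\rho)$ together with $\hat\nabla_{\tilde c'}\tilde c'=0$, a direct computation along $c$ gives
\[
\nabla_{\dot c}\dot c=\bigl(s''(t)-2(s'(t))^2\,\Up(\tilde c')|_{s(t)}\bigr)\tilde c'(s(t)),
\]
so the geodesic condition reduces to $s''(t)/s'(t)=(2/\al)\,(d/dt)\log r(s(t))$. Integrating once yields the first--order autonomous ODE
\[
s'(t)=A\,r(s(t))^{2/\al}
\]
for a real constant $A$. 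Choosing initial data $s(0)=s_0\in(0,\ep)$ with $s'(0)<0$ fixes $A<0$ and forces $s$ to be monotonically decreasing on its maximal interval of existence.

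Since the right--hand side of the ODE is smooth on $(0,\ep)$ and $s$ is trapped in $(0,s_0]$, the only way the solution can fail to exist for all $t\geq 0$ is by $s$ reaching $0$ in finite time. Separating variables, the $\nabla$-affine time for $s$ to descend from $s_0$ to $0$ is
\[
T^\ast=\int_0^{s_0}\frac{ds}{|A|\,r(s)^{2/\al}},
\]
and the local expansion $r(s)=r'(0)s+O(s^2)$ with $r'(0)>0$ shows that the integrand is comparable to $s^{-2/\al}$ near $0$. Hence $T^\ast=+\infty$ precisely when $2/\al\geq 1$, i.e.\ when $\al\leq 2$, and under this hypothesis $c:[0,\infty)\to\barm$ stays in $M$ with $c(t)\to x_0$ by monotonicity. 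The main subtlety is handling the singular behavior of the reparameterization ODE at the boundary; once this is cast in the form $s'=A\,r(s)^{2/\al}$ the $\al\leq 2$ threshold is immediate from the elementary convergence test for $\int_0 s^{-p}\,ds$.
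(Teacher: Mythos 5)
Your proposal is correct and follows essentially the same route as the paper: both pass to the extended connection $\hat\nabla$, derive the reparametrization ODE $s'(t)=A\,r(s(t))^{2/\alpha}$ (the paper's $\ph'=C(f\circ\ph)^{2/\alpha}$), and conclude from the divergence of $\int_0 s^{-2/\alpha}\,ds$ for $\alpha\leq 2$, using transversality to guarantee $r(s)\sim r'(0)s$ with $r'(0)\neq 0$. The only cosmetic difference is that you separate variables directly, whereas the paper inverts $f\circ\ph$ to a function $\psi$ and integrates explicit two-sided bounds on $\psi'$, obtaining the power-law respectively logarithmic growth rates; both yield the same infinite affine time.
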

\begin{proof}
  Fix a defining function $\rho$ for $\partial M$ on a neighborhood of
  $x_0$ and consider the connection
  $\hat\nabla:=\nabla+\tfrac{d\rho}{\alpha\rho}$ which is projectively
  related to $\nabla$ and extends to $\barm$. Then there is a unique
  vector $\xi$ tangent to the path at $x_0$ such that
  $d\rho(\xi)=1$. Now we can consider the (parameterized) geodesic
  $\hat c$ for $\hat\nabla$ emanating from $x_0$ in the direction $\xi$. For
  sufficiently small times, we will have $d\rho(\hat c'(t))>1/2$ and
  we restrict to an interval on which this is true. Then run along
  this curve backwards, and call the result again $\hat c$. So we may
  assume that $\hat c:[0,t_0]\to\barm$ is a geodesic for $\hat\nabla$
  such that $\hat c([0,t_0))\subset M$, $\hat c(t_0)=x_0\in\partial M$
    and $d\rho(\hat c'(t))<-1/2$ for all $t\in [0,t_0]$. In
    particular, this implies that $f:=\tfrac1{a}\rho\o\hat c$, where
    $a=\rho(\hat c(0))$, is an orientation reversing
    diffeomorphism from $[0,t_0]$ onto $[0,1]$.

  Now we know that this curve can be reparametrized as a geodesic for
  $\nabla$ in the form $c:=\hat c\o\ph$ where $\ph$ is a strictly
  increasing map defined on $[0,b)$ for an (initially unspecified)
    number $b\in\Bbb R$ such that $\ph(0)=0$ and $\ph'(0)=1$. The
    relation between the connections $\nabla$ and $\hat\nabla$
    together with the fact that $\hat c$ is a geodesic for
    $\hat\nabla$ shows that $\ph$ has to satisfy a differential
    equation. First we get
$$
  0=\nabla_{c'}c'(t)=\hat\nabla_{c'}c'(t)-
2\frac{d\rho(c'(t))}{\al\rho(c(t))}c'(t).
$$ Now we insert $c'(t)=\hat c'(\ph(t))\ph'(t)$ and use that
$c'\cdot\ph'=\ph''$ and hence $\hat c'\cdot\ph'=\ph''/\ph'$, and that
$\hat\nabla_{\hat c'}\hat c'=0$. Using the definition of $f$, we
conclude that
$$
0=\ph''(t)-2\frac{f'(\ph(t))\ph'(t)^2}{\al f(\ph(t))}.
$$ Dividing by $\ph'(t)$ (which is strictly positive), we get an
equality of logarithmic derivatives, which implies that
$\ph'(t)=C(f\o\ph)(t)^{2/\al}$ for some non--zero constant $C$. Since
we require $\ph'(0)=1$ we actually get $C=1$.

Now $f\o\ph$ is an orientation reversing diffeomorphism $[0,b)\to
  (d,1]$ where $d=f(\ph(b))$, and we use the equation on $\ph$ we have
just derived to obtain a differential equation on
$\ps:=(f\o\ph)^{-1}$. We get $\ps'(t)=1/(f\o\ph)'((f\o\ph)^{-1}(t))$,
and inserting the differential equation on $\ph$ we conclude that
$$ \ps'(t)=\frac1{t^{2/\al}f'(f^{-1}(t))}.
$$
By our assumptions $f'(f^{-1}(t))$ is strictly negative and bounded
away from zero, which shows that there are positive numbers $A<B$ such
that
$B\frac{-1}{t^{2/\al}}<\ps'(t)<A\frac{-1}{t^{2/\al}}$. Integrating,
and using $\ps(1)=0$, we conclude that for $\al<2$, we obtain 
$$A(-1+2/\al)(-1+t^{1-2/\al})\leq\ps(t)\leq B(-1+2/\al)(-1+t^{1-2/\al}),
$$ 
while for $\al=2$, we get 
$$
A(-\log(t))\leq \ps(t)\leq B(-\log(t)). 
$$ In any case, this implies that $\ps$ will be defined on $(0,1]$
with $\lim_{t\to 0}\ps(t)=\infty$, which implies our claims.
\end{proof}

\subsection{Projectively compact pseudo--Riemannian
  metrics}\label{2.3} 
The notion of being projectively compact of some order for affine
connections introduced in \ref{2.1} gives rise to an evident notion
for pseudo--Riemannian metrics.

\begin{definition}\label{def2.3}
Let $\barm=M\cup\partial M$ be as in \ref{2.1}.  For a positive real
number $\al$, a pseudo--Riemannian metric $g$ on $M$ is called
\textit{projectively compact} of order $\al$ if and only if its
Levi--Civita connection $\nabla$ is projectively compact of order
$\al$ in the sense of Definition \ref{def2.1}.
\end{definition}
Observe the volume density $\vol(g)$ of a pseudo--Riemannian metric
$g$ is parallel for the Levi--Civita connection. Hence we are always
in the setting of special affine connections and have a canonical
parallel density of each projective weight (not just up to a constant
factor). In particular, Proposition \ref{prop2.2} always applies and shows
that $\vol(g)^{-\al/(n+2)}$ extends to a defining density for the
boundary $\partial M$.

\medskip

We next prove that a certain asymptotic form of a metric implies
projective compactness of order $\al$ for many values of $\al\leq
2$. We formulate this asymptotic behavior in a form which does not
depend on a choice of coordinates. Indeed, consider an open subset
$U\subset\barm$ and a local defining function $\rho:U\to\Bbb R_{\geq
  0}$ for $\partial M$. Then for a nowhere vanishing smooth function
$C:U\to\Bbb R$ consider the $\binom{0}{2}$--tensor field $h=h_C$ on
$U\cap M$ defined by
\begin{equation}\label{expansion}
h:=\rho^{2/\al} g-C\frac{d\rho\odot d\rho}{\rho^{2/\al}}.
\end{equation}
We will assume that, locally around each point in the boundary we can
find a defining function $\rho$ and a function $C$, which satisfies
certain growth conditions towards the boundary, such that the tensor
field $h$ defined by \eqref{expansion} extends smoothly to the
boundary and that its restriction to the boundary is non--degenerate
on $T(U\cap\partial M)\subset T(U)|_{\partial M}$.

The condition just given means that, for an appropriate choice of a
function $C$ and a defining function $\rho$, we can write the metric as
\begin{equation}\label{asymptotics}
g=\frac{h}{\rho^{2/\al}}+C\frac{d\rho\odot d\rho}{\rho^{4/\al}}
\end{equation}
 with $h$ going to the boundary and restricting to a
 pseudo--Riemannian metric there. Specialized to this setting, our
 completeness result in Proposition \ref{prop2.1a} is nicely
 compatible with the result on completeness of such metrics in the
 case that $C$ is constant, see \cite{Melrose}.

It should be noted at this point that the dependence of such a form
on the defining function $\rho$ is different for different values of
$\al$. For $\al\neq 2$, one can absorb a constant factor in $C$ into a
constant rescaling of $\rho$. Moreover, if $\al<2$ the question
whether $h$ defined by \eqref{expansion} extends to the boundary
heavily depends on the defining function. Indeed if this works for
a defining function $\rho$, then this defining function is uniquely determined
up to addition of terms of the order of $\rho^2$.

In contrast, in the case $\al=2$, the condition is independent
of the choice of defining function. To see this, consider
$\hat\rho=e^f\rho$ for a smooth function $f:U\to\Bbb R$. Then
$d\hat\rho=e^fd\rho+\hat\rho df$ and thus
$\frac{d\hat\rho}{\hat\rho}=\frac{d\rho}{\rho}+df$. Forming the
symmetric product with $d\hat\rho$, one immediately concludes that the
tensor field $\hat h$ constructed from $\hat\rho$ according to
\eqref{expansion} (with $\al=2$) is given by
$$
\hat h=e^fh-2Ce^fd\rho\odot df-C\hat\rho df\odot df. 
$$ 
Evidently the last two summands are smooth up to the
boundary. Thus, smoothness of $h$ up to the boundary implies
smoothness of $\hat h$ up to the boundary. Moreover,
$$ \hat h|_{\partial M}=e^fh|_{\partial M}-2Ce^fd\rho\odot
df|_{\partial M}.
$$ 
Since the last term involves $d\rho$, it vanishes on $T(U\cap\partial
M)$, so there the two bilinear forms are conformally related. In
particular, we see that for a metric $g$, which has an asymptotic form
as in \eqref{asymptotics} with $\al=2$ and $h$ non--degenerate, the
restriction of $h$ to $T\partial M$ defines a conformal structure on
$\partial M$ which only depends on $g$ and not on the defining
function $\rho$.

\begin{thm}\label{thm2.3} 
Fix $\al\in (0,2]$ such that $\frac2\alpha$ is an integer. Suppose
  that $g$ is a pseudo--Riemannian metric on $M$ such that for each
  point $x_0\in\partial M$, we can find an open neighborhood $U$ of
  $x_0$ in $M$, a defining function $\rho:U\to\Bbb R_{\geq 0}$ for the
  boundary, and a nowhere vanishing smooth function $C:U\to\Bbb R$
  such that:
\begin{itemize}
\item For any vector field $\ze\in\frak X(U)$ with $d\rho(\ze)=0$, the
  function $\rho^{-2/\al}\ze\cdot C$ is smooth up to the boundary.
\item The tensor field $h$ defined in \eqref{expansion} extends
  smoothly to the boundary, with the restriction to the boundary being
  non--degenerate as a bilinear form on the boundary tangent bundle.
\end{itemize}
 Then $g$ is projectively compact of order $\al$.
\end{thm}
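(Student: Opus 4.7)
The plan is to check projective compactness directly by producing smooth Christoffel symbols for $\hat\nabla=\nabla+\tfrac{d\rho}{\al\rho}$ in a suitable local frame; by the remark following Definition \ref{def2.1} this suffices. Fix $x_0\in\partial M$ and a neighborhood $U$ on which the hypotheses of the theorem hold, and choose any smooth (up to the boundary) vector field $e_0$ with $d\rho(e_0)=1$ together with a smooth frame $e_1,\dots,e_n$ for $\ker d\rho$. Because $h$ restricts non--degenerately to $T\partial M$, the matrix $(h(e_i,e_j))$ is invertible on $U$ after shrinking, and a single Gram--Schmidt step $e_0\mapsto e_0-h^{ij}h(e_0,e_j)e_i$ yields a new frame, still of the same type, in which additionally $h(e_0,e_i)\equiv 0$ on $U$. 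Since $d\rho$ is closed and $d\rho(e_a)$ is constant, each $[e_a,e_b]$ lies in $\ker d\rho$, so $[e_a,e_b]=c_{ab}^k e_k$ with smooth coefficients.

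Set $\be:=2/\al\in\Bbb Z_{\geq 1}$. In this frame, \eqref{asymptotics} gives $g_{00}=\rho^{-2\be}F$, $g_{0i}=0$, $g_{ij}=\rho^{-\be}h_{ij}$, where $F:=C+\rho^\be h_{00}$ is smooth and, since $C$ is nowhere zero, is itself nowhere zero on $U$ (again shrinking). The inverse metric is therefore block diagonal: $g^{00}=\rho^{2\be}/F$, $g^{0i}=0$, $g^{ij}=\rho^\be h^{ij}$. I would then apply the Koszul formula in the three cases $\nabla_{e_0}e_0$, $\nabla_{e_0}e_j$, $\nabla_{e_i}e_j$ and isolate the $e_0$-- and $e_k$--components using these inverse components. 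The key calculations are that $\tfrac{e_0 g_{00}}{2 g_{00}}=-\tfrac{\be}{\rho}+O(1)=-\tfrac{2}{\al\rho}+O(1)$, and that $e_j g_{00}=\rho^{-2\be}(e_j C+\rho^\be e_j h_{00})$, which by the tangential regularity assumption $\rho^{-\be}\ze\cdot C\in C^\infty$ reduces to $\rho^{-\be}\cdot(\text{smooth})$; multiplying by $g^{kj}=\rho^\be h^{kj}$ then gives a smooth contribution to the $e_k$--component of $\nabla_{e_0}e_0$. A parallel analysis yields that the $e_k$--component of $\nabla_{e_0}e_j$ has principal singular part $-\tfrac{1}{\al\rho}\de^k_j$, and that every component of $\nabla_{e_i}e_j$ is smooth up to the boundary, the $e_0$--component producing the factor $\rho^{\be-1}$ which is smooth precisely because $\be\in\Bbb Z_{\geq 1}$.

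The projective modification $\hat\nabla=\nabla+\tfrac{d\rho}{\al\rho}$ adds $\tfrac{d\rho(e_a)}{\al\rho}e_b+\tfrac{d\rho(e_b)}{\al\rho}e_a$ to $\nabla_{e_a}e_b$, which contributes precisely $+\tfrac{2}{\al\rho}$ to the $e_0$--component of $\nabla_{e_0}e_0$, $+\tfrac{1}{\al\rho}\de^k_j$ to the $e_k$--component of $\nabla_{e_0}e_j$, and nothing to $\nabla_{e_i}e_j$; these match exactly the three singular terms computed above, so all Christoffel symbols of $\hat\nabla$ extend smoothly to $U$. The main obstacle is bookkeeping: each Koszul expression contains subleading singularities of several orders in $\rho$ which must all turn out to be smooth, and it is precisely the three hypotheses of the theorem that accomplish this---non--degeneracy of $h|_{T\partial M}$ permits the diagonalization $h(e_0,e_i)=0$ that keeps the inverse metric block diagonal and prevents unwanted mixing of orders $\rho^{-2\be}$ and $\rho^{-\be}$ in $g^{ab}$, the tangential regularity of $C$ controls the derivatives $e_j g_{00}$ appearing in the $e_k$--component of $\nabla_{e_0}e_0$, and the integrality of $\be=2/\al$ ensures that the factor $\rho^{\be-1}$ arising in $\nabla_{e_i}e_j$ is genuinely smooth at $\partial M$.
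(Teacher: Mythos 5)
Your proposal is correct and follows essentially the same route as the paper: your Gram--Schmidt normalized transversal field $e_0$ is exactly the paper's field $\zeta_0$ (constructed there by $h$-orthogonalizing a frame of $\ker(d\rho)$), and both arguments then run the Koszul formula while tracking powers of $\rho$, with the hypotheses used in the same way --- non-degeneracy of $h|_{T\partial M}$ giving the block decomposition, the tangential regularity of $C$ controlling the $e_jg_{00}$ terms, and integrality of $2/\al$ ensuring only integral (hence smooth) powers $\rho^{\beta-1}$, $\rho^{\beta}$ appear. The only difference is bookkeeping: you compute Christoffel symbols in the adapted frame, whereas the paper pairs $\hat\nabla_\xi\eta$ against $\zeta_0$ and against tangential fields $\zeta$ for arbitrary $\xi,\eta$, which is the same computation in a slightly different format.
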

\begin{proof}
  Consider the Levi--Civita connection $\nabla$ of $g$ and the
  projectively related connection $\hat\nabla$ defined as in
  \eqref{pc-def}. To prove that $\hat\nabla$ extends to the boundary,
  it suffices to show that for arbitrary vector fields $\xi$ and
  $\eta$ defined on all of $U$, also $\hat\nabla_\xi\eta$ extends
  smoothly to all of $U$. To do this, we first show that
  $d\rho(\hat\nabla_\xi\eta)$ is smooth up to the boundary. Further we
  prove that for any smooth vector field $\ze$ on $U$ such that
  $d\rho(\ze)=0$, also $h(\hat\nabla_\xi\eta,\zeta)$ is smooth up to
  the boundary. In view of our assumptions, this evidently suffices to
  complete the proof.

We first have to construct a vector field $\ze_0$ on $U$, which plays
the role of a Reeb field. Shrinking $U$ we may assume that $d\rho$ is
nowhere vanishing on $U$, so its kernel $\ker(d\rho)$ defines a
hyperplane distribution. By assumption the restriction of $h$ to the
boundary is non--degenerate on $\ker(d\rho)$. Possibly shrinking $U$
further, we may thus assume that $h$ restricts to a non--degenerate
bilinear form on $\ker(d\rho)$ on all of $U$. This implies that we can
use $h$ to orthonormalize a local frame for $\ker(d\rho)$ and, again
shrinking, we obtain a frame $\{\xi_1,\dots\xi_n\}$ for $\ker(d\rho)$
such that $h(\xi_i,\xi_j)=\ep_i\delta_{ij}$ with $\ep_i=\pm 1$ for all
$i$. Next, choose a vector field $\tilde\zeta$ on $U$ such that
$d\rho(\tilde\zeta)=1$ and define
$\zeta_0:=\tilde\zeta-\sum_i\ep_ih(\tilde\ze,\xi_i)\xi_i$. Then it is
clear that $d\rho(\zeta_0)=d\rho(\tilde\zeta)=1$ and that for any
vector field $\xi$ on $U$ such that $d\rho(\xi)=0$, we have
$h(\zeta_0,\xi)=0$.

Now a general vector field $\xi\in\frak X(U)$ can be decomposed as
$$ 
\xi=d\rho(\xi)\ze_0+(\xi-d\rho(\xi)\ze_0).
$$ 
Since the second summand lies in $\ker(d\rho)$, we see that
$h(\ze_0,\xi)=d\rho(\xi)h(\ze_0,\ze_0)$. Now for arbitrary vector
fields $\xi,\eta\in\frak X(U)$ we can use \eqref{asymptotics} to
compute on $U\cap M$ as follows:
$$
g(\hat\nabla_\xi\eta,\ze_0)=\tfrac{1}{\rho^{2/\al}}h(\hat\nabla_\xi\eta,\ze_0)+
\tfrac{C}{\rho^{4/\al}}d\rho(\hat\nabla_\xi\eta)=d\rho(\hat\nabla_\xi\eta)
(\tfrac{C}{\rho^{4/\al}}+\tfrac{h(\ze_0,\ze_0)}{\rho^{2/\al}}).
$$ 
Observe that since $2/\alpha$ is assumed to be an integer, this
expression contains only integral powers of $\rho$, which will also
hold in the further computations in this proof. Thus positive powers
of $\rho$ will always be smooth up to the boundary with boundary value
zero. In particular, we see that smoothness of
$d\rho(\hat\nabla_\xi\eta)$ up to the boundary will follow from
smoothness of $\rho^{4/\al}g(\hat\nabla_\xi\eta,\ze_0)$ up to the
boundary.

On the other hand, consider any vector field $\zeta\in\frak X(U)$ such
that $d\rho(\zeta)=0$. Again using \eqref{asymptotics}, we obtain
$$
g(\hat\nabla_\xi\eta,\zeta)=\tfrac{1}{\rho^{2/\al}}h(\hat\nabla_\xi\eta,\zeta), 
$$ 
so we can prove smoothness of $h(\hat\nabla_\xi\eta,\zeta)$ up to the boundary
by showing that $\rho^{2/\al} g(\hat\nabla_\xi\eta,\zeta)$ is smooth 
up to the boundary.

Next the Koszul formula for the Levi--Civita connection reads as 
\begin{align*}
2g(\nabla_\xi\eta,\zeta)&=\xi\cdot g(\eta,\zeta)-\zeta\cdot
g(\xi,\eta)+\eta\cdot
g(\xi,\zeta)\\ &+g([\xi,\eta],\zeta)-g([\xi,\zeta],\eta)-g([\eta,\zeta],\xi).
\end{align*}
To compute $2g(\hat\nabla_\xi\eta,\zeta)$, we have to add
\begin{equation}\label{addon}
\frac{2d\rho(\xi)}{\al\rho}g(\eta,\zeta)+
\frac{2d\rho(\eta)}{\al\rho}g(\xi,\zeta)
\end{equation}
to the right hand side. 

Now let us first look at the case $\zeta=\zeta_0$, so we want to prove
that $\rho^{4/\al}g(\hat\nabla_\xi\eta,\zeta_0)$ is smooth up to the
boundary. Now as above, we compute on $U\cap M$:
$$
g(\eta,\ze_0)=d\rho(\eta)(\tfrac{1}{\rho^{2/\al}}h(\ze_0,\ze_0)+
\tfrac{C}{\rho^{4/\al}}).   
$$ 
Differentiating this by $\xi$, and ignoring terms which are smooth
up to the boundary after multiplication by $\rho^{4/\al}$, we are left
with $\tfrac{-4Cd\rho(\xi)d\rho(\eta)}{\al\rho^{(4+\al)/\al}}$. The
same contribution comes from $\eta\cdot g(\xi,\zeta_0)$. Next,
$$
-g(\xi,\eta)=\tfrac{-1}{\rho^{2/\al}}h(\xi,\eta)+
\tfrac{-C}{\rho^{4/\al}}d\rho(\xi)d\rho(\eta).
$$
Differentiating in direction $\ze_0$ and ignoring terms which are
smooth up to the boundary after multiplication by $\rho^{4/\al}$, we see
that this term contributes
$\tfrac{4Cd\rho(\xi)d\rho(\eta)}{\al\rho^{(4+\al)/\al}}$. 

Smoothness of $h$ and $C$ up to the boundary immediately implies that
inserting two vector fields defined on all of $U$ into $g$, and
multiplying by $\rho^{4/\al}$, the result is always smooth up to the
boundary, so there comes no further contribution from the Koszul
formula. But then
$$
\tfrac{2d\rho(\xi)}{\al\rho}g(\eta,\zeta_0)=
\tfrac{2Cd\rho(\xi)d\rho(\eta)}{\al\rho^{(4+\al)/\al}}
$$ 
up to terms which are smooth up to the boundary after
multiplication by $\rho^{4/\al}$, and the other term from
\eqref{addon} gives the same contribution. Thus we have verified that
$d\rho(\hat\nabla_\xi\eta)$ is smooth up to the boundary.

So let us turn to the case that $d\rho(\zeta)=0$, and we have to show
that $\rho^{2/\al} g(\hat\nabla_\xi\eta,\zeta)$ is smooth up to the
boundary. Notice first that $dd\rho=0$ and $d\rho(\ze)=0$ imply that
$\zeta\cdot d\rho(\xi)=d\rho([\ze,\xi])$ and likewise for
$\eta$. Together with $\zeta\cdot\rho=0$ and the fact that
$\tfrac{1}{\rho^{2/\al}}\ze\cdot C$ is smooth up to the boundary, this
easily implies that 
$$
\ze\cdot g(\xi,\eta)+g([\xi,\zeta],\eta)+g([\eta,\zeta],\xi)
$$ 
is smooth up to the boundary after multiplication by
$\rho^{2/\al}$. On the other hand, $g([\xi,\eta],\zeta)$ evidently is
smooth up to the boundary after multiplication by $\rho^{2/\al}$. For
the remaining two summands in the Koszul formula, one easily computes
that, up to terms which are smooth up to the boundary after
multiplication by $\rho^{2/\al}$, one gets
$$
-\tfrac{2d\rho(\xi)h(\eta,\zeta)}{\al\rho^{(2+\al)/\al}}
-\tfrac{2d\rho(\eta)h(\xi,\zeta)}{\al\rho^{(2+\al)/\al}}, 
$$ 
and this exactly cancels with the contribution from the two terms
in \eqref{addon}.
\end{proof}

\begin{remark}\label{rem2.3} 
(i) If $2/\alpha$ is not an integer, then our proof shows continuity
  respectively some finite order of differentiability of $\hat\nabla$
  up to the boundary.

  (ii) Notice that for $\al=1$ and $C=1$ the asymptotic form in
  \eqref{asymptotics} is called a ``scattering metric'' in
  \cite[Chapter 6]{Melrose}, where it is used to develop
  generalizations of Euclidean scattering theory.  For the more
  general asymptotic form
  $g=\tfrac{d\rho^2}{\rho^{2a}}+\tfrac{h}{\rho^{2b}}$ considered in
  \cite[Chapter 8]{Melrose}, our proof suggests that projective
  compactness forces, at least in a certain range, a relation between
  $a$ and $b$. Together with the appropriate volume asymptotics this
  then pins down both exponents.

(iii) The standard form of a conformally compact metric is
  $g=\tfrac{h+dr^2}{r^2}$ for a defining function $r$. Looking at the
  volume density, we see that this has volume asymptotics of order
  $n+1=\dim(\barm)$ as compared to $\tfrac{n+2}{\al}$ for projectively
  compact metrics of order $\al$. This indicates a significant
  difference between the two types of compactifications. In the domain
  of a local chart around the boundary, there is a way to formally
  relate compactifications of the two types. Looking at a metric of
  the form \eqref{asymptotics} with $\al=2$ and $C$ constant, one can
  formally put $\rho=r^2$ (which makes it impossible for $r$ to be a
  defining function without changing the smooth structure), to obtain
$$
g=\tfrac{h}{\rho}+C\tfrac{d\rho^2}{\rho^2}=\tfrac{h+4Cdr^2}{r^2}. 
$$ 
While this can be used to reduce some local analytical questions on
metrics which are projectively compact of order two to the conformally
compact case, it is not clear to what extent this relation has
geometric meaning. Moreover, it seems not to be possible to apply a
similar idea to metrics which are projectively compact of orders
different from $2$.
\end{remark}


\section{First BGG--equations and reductions of projective
  holonomy}\label{3} 

In this section, we further explore the projective geometry of special
affine connections and in particular of pseudo--Riemannian metrics,
which are projectively compact of order one or two. By Proposition
\ref{prop2.2}, in these cases, we have a canonical defining density, which
is a section of $\Cal E(1)$ respectively of $\Cal E(2)$. Sections of
each of these two bundles form the domain of a canonical projectively
invariant overdetermined system of PDEs. These systems are coming from
the machinery of BGG sequences, which also leads to a special class of
solutions, called normal solutions. Hence we can single out
particularly nice subclasses of projectively compact affine
connections and metrics by requiring that the canonical defining
densities are solutions respectively normal solutions of the first BGG
equation. We will analyze the meaning of these conditions, noting that
for normal solutions one obtains a reduction of projective holonomy as
discussed in \cite{ageom} and \cite{hol-red}.

\subsection{Background on projective tractor bundles and first BGG
  operators}\label{3.1}

We start with a brief review on some elements of the geometry of
projective structures and a related class of projectively invariant
differential operators. We will restrict our attention to the cases we
need in this article and hence only discuss the \textit{projective
  standard cotractor bundle} and its symmetric square. More
information can be found in \cite{ageom}. The standard
cotractor bundle (or rather its dual) was introduced by T.~Thomas in
the 1930's as an alternative to the canonical Cartan connection
associated to a projective structure, a modern presentation can be
found in \cite{BEG}.

Given a manifold $N$ with a projective structure, one can define the
cotractor bundle $\Cal T^*$ simply as the first jet--prolongation
$J^1(\Cal E(1))$ of the density bundle $\Cal E(1)$ introduced in
Section \ref{2.2}. In particular, one has the jet exact sequence
\begin{equation}\label{Xdef}
0\to T^*N\otimes\Cal E(1)\to \Cal T^*\stackrel{X}{\to}\Cal E(1)\to 0.
\end{equation}
From now on, we will sometimes use abstract index notion, so we
write $\Cal E^a$ for the tangent bundle and $\Cal E_a$ for the
cotangent bundle, and we will indicate a tensor product with the line
bundle $\Cal E(w)$ by adding ``$(w)$'' to the name of a bundle. In
this notation, the jet exact sequence reads as $0\to \Cal
E_a(1)\to\Cal T^*\to\Cal E(1)\to 0$.

Choosing a connection $\nabla$ from the projective class, one obtains
an induced connection on the density bundle $\Cal E(1)$ and thus a
splitting of the jet exact sequence, i.e.~an isomorphism $\Cal
T^*\cong \Cal E_a(1)\oplus\Cal E(1)$. In this picture, we write
sections of $\Cal T^*$ as vectors $\binom{\si}{\mu_a}$ with
$\si\in\Ga(\Cal E(1))$ and $\mu_a\in\Ga(\Cal E_a(1))$. Changing from the
connection $\nabla$ to the connection $\nabla+\Up_a$ as defined in
Section \ref{2.1}, this identification changes as 
\begin{equation}
  \label{eq:std-trans}
 \binom{\si}{\mu_a}\mapsto\binom{\si}{\mu_a+\Up_a\si}, 
\end{equation}
which also shows that the projection onto the top slot and the
inclusion of the bottom slot are natural bundle maps.

There is a natural connection $\nabla^{\Cal T^*}$ on $\Cal T^*$,
which, in the identification $\Cal T^*\cong \Cal E_a(1)\oplus\Cal
E(1)$ defined by $\nabla$, is given by
\begin{equation}
  \label{eq:std-conn}
  \nabla^{\Cal T^*}_a\binom{\si}{\mu_a}=
\binom{\nabla_a\si-\mu_a}{\nabla_a\mu_b+\Rho_{ab}\si}.
\end{equation}
Here $\Rho_{ab}$ denotes the (projective) Schouten--tensor. For our
purposes it suffices to know that for a special affine connection in
the projective class, we have
$\Rho_{ab}=\tfrac{1}{\dim(N)-1}\Ric_{ab}$, where $\Ric_{ab}$ is the
usual Ricci tensor, see \cite{BEG}, and this is symmetric.

\medskip

The description of the cotractor bundle is particularly simple for the
\textit{homogeneous model} of projective geometry. We consider here the
model for orientable projective structures, which is the sphere
$S^{n+1}$ viewed as the ray projectivization of $\Bbb
R^{n+2}\setminus\{0\}$. In this case, one can actually identify $\Bbb
R^{n+2}\setminus\{0\}$ with the frame bundle of $\Cal E(1)$, so
densities of weight $w$ can be identified with smooth functions on
$\Bbb R^{n+2}\setminus\{0\}$ which are homogeneous of degree
$w$. Moreover, a local nowhere--vanishing section of $\Cal E(1)$ can
be viewed as a local section of the ray projectivization, and the
connection on $S^{n+1}$ leaving that scale parallel is just the
pullback of the standard flat connection on $\Bbb
R^{n+2}\setminus\{0\}$ along the section. 

Sections of the standard cotractor bundle $\Cal T^*$ can be identified
with one--forms on $\Bbb R^{n+2}\setminus\{0\}$ which are homogeneous
of degree $1$, and the tractor connection is again induced from the
flat connection. In particular, a parallel section of $\Cal T^*$ is
equivalent to a fixed element of $\Bbb R^{(n+2)*}$, which is viewed as
a differential form on $\Bbb R^{n+2}\setminus\{0\}$. Sections of more
general tractor bundles can be dealt with in a similar way. 

\medskip

Apart from the tractor connection, we will need a second ingredient,
the \textit{Kostant codifferential}. There is an obvious natural
bundle map $\partial^*:\Cal E_a\otimes\Cal T^*\to\Cal T^*$ defined by
$\ph_a\otimes\binom\si{\mu_a}\mapsto\binom0{\si\ph_a}$. This can be
interpreted as defining an action of the bundle $\Cal E_a$ of abelian
Lie algebras on the bundle $\Cal T^*$. Thus it extends to a sequence
of bundle maps $\partial^*:\La^kT^*N\otimes\Cal
T^*\to\La^{k-1}T^*N\otimes\Cal T^*$ such that
$\partial^*\o\partial^*=0$. Hence we have bundle maps on the bundles
of $\Cal T^*$--valued differential forms such that
$\im(\partial^*)\subset\ker(\partial^*)\subset \La^kT^*N\otimes\Cal
T^*$. What we really need is the explicit description of these two
subspaces in the case $k=1$. Using the obvious extension of the vector
notation from above, the end of the $\partial^*$--sequence has the
form
$$
\binom{\Cal E(1)}{\Cal E_a(1)}\overset{\partial^*}{\longleftarrow} 
\binom{\Cal E_a(1)}{\Cal
  E_{ab}(1)}\overset{\partial^*}{\longleftarrow} 
\binom{\Cal E_{[ab]}(1)}{\Cal E_{[ab]c}(1)}
\overset{\partial^*}{\longleftarrow}\dots 
$$
From the definition above it is evident that $\partial^*$ always maps a
row in some column to one row below in the next column.  Moreover, one
may use general tools to show that the cohomology of the sequence is
given by $\Cal E(1)$ in degree $0$ and by $\Cal E_{(ab)}$ in degree
one. This implies that $\im(\partial^*)\subset\ker(\partial^*)\subset
T^*N\otimes\Cal T^*$ has the form
\begin{equation}
  \label{eq:std-subb}
  \binom{0}{\Cal E_{[ab]}(1)}\subset \binom{0}{\Cal E_{ab}(1)}\subset
\binom{\Cal E_a(1)}{\Cal E_{ab}(1)}.
\end{equation}

\medskip

Next, we need the analogous information for the symmetric square
$S^2\Cal T^*$. Choosing a connection from the projective class, we
evidently get an isomorphism $S^2\Cal T^*\cong \Cal
E_{(ab)}(2)\oplus\Cal E_a(2)\oplus\Cal E(2)$, and we will use a vector
notation with three components, similar to the case of $\Cal
T^*$. Passing from $\nabla$ to $\hat\nabla=\nabla+\Up$, this
identification changes as 
\begin{equation}
  \label{eq:S2-trans}
  \begin{pmatrix}  \tau\\ \nu_a \\ \rho_{ab} \end{pmatrix}\mapsto 
\begin{pmatrix}  \tau\\ \nu_a+\Up_a\tau \\ 
\rho_{ab}+2\Up_{(a}\nu_{b)}+\Up_a\Up_b\tau \end{pmatrix}.
\end{equation}

The connection on $S^2\Cal T^*$ induced by $\nabla^{\Cal T^*}$ can be
easily computed directly. It is given by
\begin{equation}
  \label{eq:S2-conn}
\nabla^{S^2\Cal T^*}_a
\begin{pmatrix}
 \tau\\ \nu_b \\ \rho_{bc} 
\end{pmatrix}=
\begin{pmatrix}
  \nabla_a\tau-2\nu_a  \\ \nabla_a\nu_b+\Rho_{ab}\tau-\rho_{ab} \\ 
\nabla_a\rho_{bc}+2\Rho_{a(b}\nu_{c)}
\end{pmatrix}.  
\end{equation}
The interpretation of $\partial^*$ as an action of the bundle $\Cal E_a$ of
abelian Lie algebras on $\Cal T^*$ readily provides a similar action
$\partial^*:\Cal E_a\otimes S^2\Cal T^*\to S^2\Cal T^*$. This then
extends to a sequence of differentials defined on the bundles of
differential forms with values in $S^2\Cal T^*$.  To understand
$\im(\partial^*)\subset\ker(\partial^*)\subset T^*N\otimes S^2\Cal
T^*$, we again write out the end of the sequence:
$$
\begin{pmatrix} \Cal E(2)\\ \Cal E_a(2) \\ \Cal
  E_{(ab)}(2)\end{pmatrix} \overset{\partial^*}{\longleftarrow}
\begin{pmatrix} \Cal E_a(2)\\ \Cal E_{ab}(2) \\ \Cal
  E_{a(bc)}(2)\end{pmatrix} \overset{\partial^*}{\longleftarrow} 
\begin{pmatrix} \Cal E_{[ab]}(2)\\ \Cal E_{[ab]c}(2) \\ \Cal
  E_{[ab](cd)}(2)\end{pmatrix}
$$ As before, application of $\partial^*$ moves down one row, and the
cohomologies of the sequence are known to be $\Cal E(2)$ in degree
zero and $\Cal E_{(abc)}(2)\subset \Cal E_{a(bc)}(2)$ in degree
one. Hence the map $\partial^*$ defined on $T^*N\otimes S^2\Cal T^*$
must map $\Cal E_a(2)$ isomorphically onto the copy of the same bundle
contained in $S^2\Cal T^*$ and $\Cal E_{ab}(2)$ onto the copy of $\Cal
E_{(ab)}(2)$ contained in that bundle. Likewise, the next map
$\partial^*$ has to map $\Cal E_{[ab]}(2)$ isomorphically onto the
copy of this bundle contained in the middle slot of $\Cal E_a\otimes
S^2\Cal T^*$ and it must map $\Cal E_{[ab]c}(2)$ onto the kernel of
the complete symmetrization in the bottom slot of this bundle. Thus we
see that $\im(\partial^*)\subset\ker(\partial^*)\subset T^*N\otimes
S^2\Cal T^*$ is given by
\begin{equation}
  \label{eq:S2-subb}
\begin{pmatrix} 0 \\ \Cal E_{[ab]}(2) \\ \Cal F(2) \end{pmatrix}
\subset \begin{pmatrix} 0 \\ \Cal E_{[ab]}(2) \\  \Cal E_{a(bc)}(2)
\end{pmatrix}\subset \begin{pmatrix} \Cal E_a(2)\\ \Cal E_{ab}(2) \\ \Cal
  E_{a(bc)}(2)\end{pmatrix},  
\end{equation}
where $\Cal F\subset \Cal E_{a(bc)}$ is the kernel of the complete
symmetrization. 

\medskip

Now we are ready to describe the relation of the structures developed
so far to the first BGG operators determined by the two bundles. The
construction of BGG sequences, see e.g.~the sketch in
\cite{deformations}, shows that given any density $\psi$ of the
appropriate weight, there is a unique section $L(\psi)$ of the tractor
bundle with $\psi$ in the top slot such that applying the tractor
connection one obtains a section of the subbundle
$\ker(\partial)^*$. Then the first BGG operator is obtained by
projecting this section to the quotient bundle
$\ker(\partial^*)/\im(\partial^*)$. In particular, $\psi$ is a
solution of the first BGG--operator if and only if the covariant
derivative of $L(\psi)$ actually is a section of
$\im(\partial^*)$. There is an obvious subclass of solutions, namely
those $\psi$, for which $L(\psi)$ actually is a parallel section of
the tractor bundle in question. These are the \textit{normal
  solutions} which are the main object of study in \cite{ageom} and
\cite{hol-red}.

\subsection{Projective compactness of order one}\label{3.2} 
We want to now treat geometric structures that are related to
projective compactness of order one.  As before, we are working on a
smooth manifold $\barm$ of dimension $n+1$ with boundary; we write $M$
for the interior of $\barm$ and $\partial M$ for the boundary.

Let us first assume that $\nabla$ is a special affine connection on
$M$, which is projectively compact of order one. Then by Proposition
\ref{prop2.2} there is a natural defining density for $\partial M$
which is a section of $\Cal E(1)$. Next, we use the machinery
developed in Section \ref{3.1} to understand the splitting operator
and the first BGG operated defined on sections of this bundle.

Given any section $\si\in\Ga(\Cal E(1))$, we first have to find a section
$s\in\Ga(\Cal T^*)$ with $\si$ in the top slot and the additional
property that $\nabla^{\Cal T^*}s$ has zero in the top slot, so then
$s=L(\si)$. From the definition \eqref{eq:std-conn} of the standard
tractor connection it is clear that
\begin{equation}\label{split1}
L(\si)=\binom{\si}{\tilde\nabla_a\si}
\end{equation}
in the splitting determined by an arbitrary connection $\tilde\nabla$ in
the projective class. Applying the tractor connection to this, we get
$\binom{0}{\tilde\nabla_a\tilde\nabla_b\si+\tilde{\sf P}_{ab}\si}$, so
the first BGG operator is given by
\begin{equation}\label{1stBGG1}
\si\mapsto \tilde\nabla_{(a}\tilde\nabla_{b)}\si+\tilde\Rho_{(ab)}\si .
\end{equation} 
The existence of the splitting operator readily leads to
information on special affine connections which are projectively
compact of order $1$:

\begin{prop}\label{prop3.2}
 Let $\nabla$ be a special affine connection on $M$ which is
 projectively compact of order $\al=1$, and let $\si\in\Ga(\Cal E(1))$
 be the canonical defining density for $\partial M$ determined by
 $\nabla$. Then we have:

  (i) The section $L(\si)\in\Ga(\Cal T^*)$ is nowhere vanishing on
 $\barm$.

  (ii) The smooth section $\Rho_{ab}\si$ of $\Cal E_{(ab)}(1)$ over $M$
 extends smoothly to $\barm$. The restriction of this section to
 $\partial M$ can be naturally viewed as a second fundamental form for
 the boundary. In particular, $\partial M\subset\bar M$ is totally
 geodesic if and only if this second fundamental form vanishes
 identically on $T\partial M\x T\partial M$.
\end{prop}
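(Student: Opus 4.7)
The plan is to get both parts from a single calculation: apply the splitting operator $L$ to $\si$ and then apply the tractor connection to $L(\si)$, reading the result off in two different splittings (the one determined by $\nabla$ on $M$, and the one determined by a connection $\hat\nabla$ which extends to $\bar M$).

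To set up, I would first invoke Proposition \ref{prop2.2}: fix a defining function $\rho$ near a boundary point, let $\hat\nabla=\nabla+\tfrac{d\rho}{\rho}$ be the projectively related connection that extends to $\bar M$, and let $\hat\si:=\si/\rho$, which is a nowhere vanishing section of $\Cal E(1)$ on $\bar M$, parallel for $\hat\nabla$. Then $\si=\rho\hat\si$ on $\bar M$, and $\hat\nabla_a\si=(d\rho)_a\hat\si$.

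For (i), I would observe that $L(\si)$ is well defined and smooth on all of $\bar M$, since $L$ is a natural operator for the projective structure (which extends to $\bar M$) and $\si$ extends smoothly as a defining density. Using formula \eqref{split1} in the $\nabla$--splitting on $M$ gives $L(\si)=\binom{\si}{0}$, which is nonzero on $M$ because $\si$ is. Using the same formula in the $\hat\nabla$--splitting near a boundary point gives $L(\si)=\binom{\rho\hat\si}{(d\rho)_a\hat\si}$; at $\partial M$ the top slot vanishes but the bottom slot equals $d\rho\otimes\hat\si$, which is nonzero since $d\rho$ is nowhere zero on $\partial M$ and $\hat\si$ is nowhere zero. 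Hence $L(\si)$ is nowhere vanishing on $\bar M$.

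For (ii), I would apply $\nabla^{\Cal T^*}$ to $L(\si)$. By the defining property of the splitting operator (first BGG construction), the natural projection of $\nabla^{\Cal T^*}L(\si)$ to the top slot $T^*\bar M\otimes \Cal E(1)$ vanishes, so $\nabla^{\Cal T^*}L(\si)$ is a smooth section of the natural subbundle $\binom{0}{\Cal E_{ab}(1)}\subset T^*\bar M\otimes\Cal T^*$ on all of $\bar M$. On $M$, evaluating in the $\nabla$--splitting via \eqref{eq:std-conn} and using $\nabla\si=0$ gives $\nabla^{\Cal T^*}L(\si)=\binom{0}{\Rho_{ab}\si}$ (which is symmetric since $\nabla$ is special). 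Thus the intrinsic object $\nabla^{\Cal T^*}L(\si)$ provides the smooth extension of $\Rho_{ab}\si$ to $\bar M$, which is a section of $\Cal E_{(ab)}(1)$ by continuity of symmetry. To identify it geometrically on $\partial M$, I would redo the computation in the $\hat\nabla$--splitting, where $L(\si)=\binom{\rho\hat\si}{(d\rho)_b\hat\si}$ and $\hat\nabla\hat\si=0$ yield
\[
\nabla^{\Cal T^*}_a L(\si)=\binom{0}{\bigl(\hat\nabla_a d\rho_b+\rho\,\hat\Rho_{ab}\bigr)\hat\si},
\]
so on $\partial M$ the extension of $\Rho_{ab}\si$ equals $\hat\nabla_a d\rho_b\,\hat\si$. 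Since $\hat\nabla$ is torsion free and $d(d\rho)=0$, the tensor $\hat\nabla_a d\rho_b$ is symmetric; its restriction to $T\partial M\otimes T\partial M$ equals the classical second fundamental form of the hypersurface $\partial M$ with respect to $\hat\nabla$ and the conormal $d\rho$ (because $d\rho$ vanishes on $T\partial M$, so $\hat\nabla_X d\rho(Y)=-d\rho(\hat\nabla_X Y)$ for $X,Y\in T\partial M$). Multiplication by the nowhere vanishing density $\hat\si$ then turns it into the claimed density-valued second fundamental form, and $\partial M$ is totally geodesic in the projective class exactly when this second fundamental form vanishes.

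The main obstacle is essentially bookkeeping: one must keep straight that $\nabla$ lives only on $M$, that $\hat\nabla$ lives on $\bar M$, and that the object $\nabla^{\Cal T^*}L(\si)$ is intrinsic to the projective structure and hence globally smooth on $\bar M$, so that the two splitting-dependent formulas for it must agree on $M$ and the one in the $\hat\nabla$--splitting provides the extension and geometric interpretation on $\partial M$.
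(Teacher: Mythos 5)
Your proposal is correct and follows essentially the same route as the paper: both parts are obtained by computing $L(\si)$ via \eqref{split1} in the $\nabla$-- and $\hat\nabla$--splittings (using $\hat\si=\si/\rho$ from Proposition \ref{prop2.2}) for (i), and by evaluating $\nabla^{\Cal T^*}L(\si)$ in both splittings for (ii), so that the boundary value $\hat\si\,\hat\nabla_a d\rho_b$ gives the weighted second fundamental form. The only cosmetic difference is that you justify the global vanishing of the top slot of $\nabla^{\Cal T^*}L(\si)$ via the $\ker(\partial^*)$ characterization of the splitting operator, while the paper notes that an element concentrated in the bottom slot is independent of the choice of splitting; these amount to the same fact.
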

\begin{proof}
  Since $\si$ is parallel for $\nabla$, formula \eqref{split1} for the
  splitting operator shows that $L(\si)=\binom{\si}{0}$ on $M$ in the
  splitting corresponding to $\nabla$, so in particular this is
  nowhere vanishing on $M$. Now choose a local defining function
  $\rho$ for the boundary and consider the connection
  $\hat\nabla=\nabla+\frac{d\rho}{\rho}$ which locally extends to all
  of $\barm$. Then from formula \eqref{eq:std-trans} we see that
  $L(\si)=\binom{\si}{\frac{\si}{\rho}d\rho}$ in the splitting
  corresponding to $\hat\nabla$. But from the proof of Proposition
  \ref{prop2.2} we know that the density $\frac{\si}{\rho}$ is parallel
  for $\hat\nabla$ and extends to $\partial M$, so it is nowhere
  vanishing. Since $d\rho$ is non--vanishing along $\partial M$, (i)
  follows.

  (ii) We have also seen already that, in the splitting defined by
  $\nabla$ on $M$, $\nabla^{\Cal T^*}L(\si)$ has zero in the top slot
  and $\Rho_{ab}\si$ in the bottom slot. Since this is concentrated in
  the bottom slot, it is independent of the choice of splitting. Now of
  course $ \nabla^{\Cal T^*}L(\si)$ is defined on all of $\barm$ and
  the values along $\partial M$ must also lie in the subbundle $\Cal
  E_{(ab)}(1)$, which gives the required extension of $\Rho_{ab}\si$.

  Now take a local defining function $\rho$ and the corresponding
  connection $\hat\nabla$ as in the first part of the proof. From
  there we know that
  $L(\si)=\binom{\si}{\hat\nabla_a\si}=\binom{\si}{\frac{\si}{\rho}d\rho}$
  in the splitting determined by $\hat\nabla$. Applying the defining
  formula \eqref{eq:std-conn} for the tractor connection and using
  that $\frac{\si}{\rho}$ is extends to a density $\hat\si$ which is
  parallel for $\hat\nabla$, we see that $\nabla^{\Cal T^*}L(\si)$ has
  zero in the top slot and $\hat\si\hat\nabla_ad\rho+\hat\Rho_{ab}\si$
  in the bottom slot.  Since $\si$ vanishes along $\partial M$, we see
  that the extension of $\Rho_{ab}\si$ is given by
  $\hat\si\hat\nabla_ad\rho$ along $\partial M$. Since $\hat\si$ is
  nowhere vanishing, we see that inserting vector fields $\xi,\eta$
  tangent to the boundary, this gives a non--zero multiple of
  $d\rho(\hat\nabla_\xi\eta)$ which justifies the interpretation as a
  (projectively weighted) second fundamental form.
\end{proof}

Notice that in the case that the second fundamental form from part (ii)
is non--degenerate, it defines a canonical conformal structure on the
boundary.

\medskip

Next, we can analyze the meaning of the canonical defining density
being a solution, respectively normal solution, of the first BGG
operator \eqref{1stBGG1}.

\begin{thm}\label{thm3.2a}
  Let $\barm$ be a smooth manifold with boundary $\partial M$ and
  interior $M$. Let $\nabla$ be a special affine connection on $M$
  which is projectively compact of order $\al=1$, and let
  $\si\in\Ga(\Cal E(1))$ be the canonical defining density for
  $\partial M$ determined by $\nabla$.

Then $\si$ is a solution of the first BGG operator defined on $\Cal
E(1)$ if and only if the connection $\nabla$ is Ricci flat. In this
case, the boundary $\partial M$ is totally geodesic and hence inherits
a projective structure. Moreover, $\si$ automatically is a normal
solution of the first BGG operator \eqref{1stBGG1} so one is in the
situation of a reduction of projective holonomy as described in
Theorem 3.1 of \cite{ageom}.
\end{thm}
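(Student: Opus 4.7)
The plan is to compute the first BGG operator on $\si$ directly in the splitting determined by $\nabla$, then exploit density of $M$ in $\barm$ and the known extensions to push everything to the boundary.

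First I would use the special connection $\nabla$ itself as my splitting from Section \ref{3.1}. Because $\nabla\si=0$, formula \eqref{split1} reduces to $L(\si)=\binom{\si}{0}$ on $M$, and then formula \eqref{eq:std-conn} immediately yields
\[
\nabla^{\Cal T^*}L(\si)=\binom{0}{\Rho_{ab}\si}
\]
on $M$. Applying the projection to the quotient by $\im(\partial^*)$ as described in \eqref{eq:std-subb}, the first BGG operator on $\si$ is represented by $\Rho_{(ab)}\si$ on $M$. Since $\nabla$ is special, the Schouten tensor is symmetric and a non-zero multiple of $\Ric_{ab}$, so $\Rho_{(ab)}\si=\tfrac{1}{n}\Ric_{ab}\si$. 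Because $\si$ is nowhere vanishing on $M$ and the BGG operator is projectively invariant (so its zero locus on $\barm$ is determined by its value on the dense subset $M$), the BGG equation holds on $\barm$ if and only if $\Ric$ vanishes on $M$, i.e.\ $\nabla$ is Ricci flat. This proves the first equivalence.

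Next, assuming $\nabla$ is Ricci flat, I would establish that $\si$ is a normal solution, i.e.\ that $L(\si)$ is parallel for the tractor connection on $\barm$. Ricci flatness forces $\nabla^{\Cal T^*}L(\si)=0$ on $M$ by the computation above. Proposition \ref{prop3.2}(i) shows that $L(\si)$ extends to a smooth (indeed nowhere vanishing) section of $\Cal T^*$ on $\barm$, and since the projective structure extends to $\barm$ the tractor connection $\nabla^{\Cal T^*}$ also extends. Thus $\nabla^{\Cal T^*}L(\si)$ is a smooth $T^*\barm$-valued section of $\Cal T^*$ on $\barm$ which vanishes on the dense open subset $M$, hence vanishes identically on $\barm$. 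This is exactly the characterization of normal solutions from the end of Section \ref{3.1}, putting us in the framework of holonomy reductions treated in \cite{ageom,hol-red}, and in particular in the situation of Theorem 3.1 of \cite{ageom}.

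For the totally geodesic claim, I would invoke Proposition \ref{prop3.2}(ii): the extension to $\barm$ of the section $\Rho_{ab}\si$ restricted to $\partial M$ is (a non-zero multiple of) the second fundamental form. Under Ricci flatness this extension is the continuous extension of the identically zero section $\Rho_{ab}\si$ on $M$, so it vanishes along $\partial M$. Hence $\partial M$ is totally geodesic, and the restriction of the geodesic path data of the projective class on $\barm$ to $\partial M$ gives a well-defined projective structure on the boundary.

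The main obstacle in this argument is subtle rather than computational: one has to be careful that the bottom-slot computation of $\nabla^{\Cal T^*}L(\si)$, performed using the splitting of a connection that does not extend to $\barm$, genuinely describes the extended tractor-covariant-derivative on $\barm$. This is handled by the fact, already exploited in Proposition \ref{prop3.2}, that $\Rho_{ab}\si$ lies in the bottom slot and is therefore independent of splitting, together with smoothness of the extension of $L(\si)$ and continuity of the extended tractor connection. Once this point is accepted, everything else is either formal (projective invariance and density of $M$) or a direct citation of Proposition \ref{prop3.2} and of Theorem 3.1 of \cite{ageom}.
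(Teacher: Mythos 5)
Your proposal is correct and follows essentially the same route as the paper's proof: compute $L(\si)=\binom{\si}{0}$ in the splitting determined by $\nabla$, read off from \eqref{eq:std-conn} that the BGG equation amounts to $\Ric_{ab}\si=0$ (hence Ricci flatness, since $\Rho_{ab}$ is symmetric and a non--zero multiple of $\Ric_{ab}$ for a special connection), deduce normality from the vanishing of $\nabla^{\Cal T^*}L(\si)$, and obtain the totally geodesic boundary from Proposition \ref{prop3.2}(ii). The only difference is that you spell out the density/continuity argument for extending the vanishing of $\nabla^{\Cal T^*}L(\si)$ to $\barm$, which the paper leaves implicit; this is a correct elaboration rather than a different approach.
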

\begin{proof}
We have observed already that $L(\si)=\binom{\si}{0}$ in the splitting
determined by $\nabla$, and using $\nabla$ to write the first BGG
operator, we see that $\si$ is a solution if and only if
$\Ric_{ab}\si=0$. (Recall that $\Rho_{ab}$ is symmetric and a
non--zero multiple of $\Ric_{ab}$ since $\nabla$ is a special affine
connection.) Hence $\si$ is a solution if and only if $\nabla$ is
Ricci flat. It then follows from part (ii) of Proposition \ref{prop3.2}
that $\partial M$ is totally geodesic, which in turn implies that one
obtains an induced projective structure. Moreover, if $\nabla$ is Ricci
flat, then the formulae above immediately imply that $L(\si)$ is
parallel, so $\si$ is a normal solution.
\end{proof}

Conversely, given a projective structure on $\barm$, a Ricci flat
special connection on $M$ which lies in the projective class and does
not extend to any part of the boundary must be projectively compact of
order one:
\begin{thm}\label{thm3.2b}
Let $\barm$ be a smooth manifold with boundary $\partial M$ and
interior $M$. Suppose that $\barm$ is endowed with a projective
structure and that $\nabla$ is a connection over $M$ contained in (the
restriction of) the projective class such that 
\begin{itemize}
\item $\nabla$ is special, i.e.~preserves a non--zero section $\kappa$
of $\vol(M)$;
\item $ \Ric^\nabla =0 $;
\item $\nabla$ does not extend smoothly to any neighborhood of a
boundary point.  
\end{itemize}

Then $\nabla$ is projectively compact of order $\alpha =1$,
$\si:=\kappa^{-1/(n+2)} $ smoothly extends by zero to a defining
density for $\partial M$, and this extension satisfies the equation of
the first BGG operator \eqref{1stBGG1} on $\barm$. So we are in
the situation of Theorem \ref{thm3.2a}.
\end{thm}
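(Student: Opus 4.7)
The plan is to construct a defining density in $\Ga(\Cal E(1))$ by extending the parallel standard tractor $L(\si)$ across $\partial M$, and then invoke Proposition \ref{prop2.2}(ii). Set $\si:=\kappa^{-1/(n+2)}\in\Ga(\Cal E(1)|_M)$; this is nowhere vanishing and parallel for $\nabla$ because $\nabla\kappa=0$. In the splitting of $\Cal T^*$ determined by $\nabla$ on $M$, formula \eqref{split1} gives $L(\si)=\binom{\si}{0}$, and the tractor connection \eqref{eq:std-conn} then yields $\nabla^{\Cal T^*}_a L(\si)=\binom{0}{\Rho_{ab}\si}$. Since $\nabla$ is a special affine connection, $\Rho_{ab}=\tfrac{1}{n}\Ric^\nabla_{ab}=0$, so $L(\si)$ is a parallel section of $\Cal T^*$ over $M$.

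Next I extend $L(\si)$ to a parallel section of $\Cal T^*$ over $\barm$. Fix $x_0\in\partial M$ and a collar chart $U\cong(-\ep,\ep)\x V$ with $\partial M\cap U=\{0\}\x V$ and $M\cap U=(0,\ep)\x V$. Since the projective structure extends to $\barm$, both $\Cal T^*$ and $\nabla^{\Cal T^*}$ are smooth over $U$. Parallel transporting $L(\si)|_{\{\ep/2\}\x V}$ along the integral curves of $\partial_{x^0}$ amounts, in a local trivialization, to a linear ODE with smooth coefficients depending smoothly on $V$, and thus produces a smooth section $\tilde L$ of $\Cal T^*$ over $U$. Inside $M\cap U$ both $L(\si)$ and $\tilde L$ are parallel along those integral curves and agree on $\{\ep/2\}\x V$, so $\tilde L=L(\si)$ throughout $M\cap U$. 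The identity $\nabla^{\Cal T^*}\tilde L=0$ then extends from $M\cap U$ to $U$ by continuity and density of $M\cap U$ in $U$. Local extensions over different boundary charts agree on their intersections with $M$ (both coinciding with $L(\si)$ there), hence agree everywhere by density, and so glue to a parallel section of $\Cal T^*$ on $\barm$ that I still denote $L(\si)$.

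The top slot of the extended $L(\si)$ is a smooth extension of $\si$ to $\barm$. I claim $\si|_{\partial M}=0$: otherwise $\si$ would be nonvanishing on a neighborhood $W$ of some $x_0\in\partial M$ and would be a scale on $W$; the unique connection in the projective class on $W$ for which this scale is parallel would then agree with $\nabla$ on $W\cap M$ by uniqueness, extending $\nabla$ across $x_0$ and contradicting the hypothesis. Moreover, $L(\si)$ is parallel on $\barm$ and nonzero on $M$, so nowhere zero on $\barm$; writing $L(\si)=\binom{\si}{\tilde\nabla\si}$ in any local splitting coming from a connection $\tilde\nabla$ in the projective class, and using $\si|_{\partial M}=0$, forces $\tilde\nabla\si$ to be nowhere zero along $\partial M$, so $\si$ is a defining density for $\partial M$. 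Proposition \ref{prop2.2}(ii) now identifies the connection on $M$ determined by the scale $\si$ as projectively compact of order $\al=1$; that connection is $\nabla$ itself since $\nabla\si=0$. Finally, $\nabla^{\Cal T^*}L(\si)=0$ globally on $\barm$ means $\si$ is a normal solution of \eqref{1stBGG1}, placing us in the setting of Theorem \ref{thm3.2a}. The main technical obstacle is the extension step in the second paragraph; once $L(\si)$ is seen to extend parallelly across $\partial M$, the rest is projective-tractor bookkeeping together with Proposition \ref{prop2.2}(ii).
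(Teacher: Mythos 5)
Your proof is correct and follows essentially the same route as the paper's: extend the parallel tractor $I=L(\si)$ across $\partial M$ by parallel transport, rule out $\si(q)\neq 0$ at boundary points via the non-extendability hypothesis, use the nowhere-vanishing of the parallel tractor together with \eqref{split1} to conclude that $\si$ extends to a defining density, and finish with Proposition \ref{prop2.2}(ii) and normality of the solution. The only difference is that you spell out the parallel-transport extension (collar chart, linear ODE with smooth parameter dependence, gluing by density of $M$), which the paper compresses into a single sentence.
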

\begin{proof}
  By definition, $\si$ is a section of $\mathcal{E}(1)$ defined on $M$
  and nowhere vanishing there. Since $\Ric^\nabla =0$, and $\nabla\si
  =0$ it follows that $I:=L(\si)=\binom{\si}{0}$ is parallel on
  $M$. But then $I$ extends by parallel transport to a parallel
  tractor on all of $\barm$ and projecting this to the quotient bundle
  $\Cal E(1)$, we obtain a smooth extension of $\si$ to $\barm$. By
  continuity we see that on all of $\barm$, we have $I=L(\si)$ and
  that $\si$ satisfies the first BGG equation \eqref{1stBGG1} on
  $\barm$.

  Next we claim that (the extension of) $\si$ is identically zero on
  $\partial M$. If $q\in\partial M$ would be a point such that
  $\si(q)\neq 0$, then take an open neighborhood $U$ of $q$ on which
  $\si$ is non--vanishing. Then there is a unique connection in (the
  restriction of) the projective class on $U$ for which $\si$ is
  parallel. By construction, this agrees with $\nabla$ on
  $U\cap\partial M$, thus providing an extension contradicting our
  assumptions. Hence we see that the zero locus of $\si$ coincides
  with $\partial M$. Finally, $I$ is parallel and hence nowhere zero,
  thus from \eqref{split1} we see that for any connection
  $\tilde\nabla$ in the projective class, which extends to the
  boundary, $\tilde{\nabla}\si$ is nowhere zero along $\partial
  M$. Thus $\si$ is a defining density for $\partial M$.
\end{proof}

\subsection{Projective compactness of order two}\label{3.3} 
We now want to consider structures which turn out to be related to
projective compactness of order two.

First let us assume that we have an affine connection $\nabla$ on $M$
which is projectively compact of order two,  and let us denote by
$\tau\in\Ga(\Cal E(2))$ a corresponding defining density (which is
unique up to a constant factor). To
apply the machinery from Section \ref{3.1}, we first have to describe
$L(\tau)\in\Ga(S^2\Cal T^*)$. Taking any connection $\tilde\nabla$ in
the projective class, formula \eqref{eq:S2-conn} shows that we must
have $\nu_a=\tfrac12\tilde\nabla_a\tau$, and then
$\rho_{ab}=\tfrac12\tilde\nabla_{(a}\tilde\nabla_{b)}\tau+\tilde\Rho_{(ab)}\tau$,
and this describes $L(\tau)$. In particular, in the splitting
determined by $\nabla$, we get
\begin{equation}\label{Lform}
L(\tau)=\begin{pmatrix} \tau \\ 0\\ \Rho_{ab}\tau \end{pmatrix} \qquad 
\nabla^{S^2\Cal T^*}_aL(\tau)=\begin{pmatrix} 0 \\ 0\\ 
\tau\nabla_a\Rho_{bc}\end{pmatrix},
\end{equation}
where as before we use that $\Rho_{ab}$ is symmetric. 

\begin{thm}\label{thm3.3a} 
Let $\barm$ be a smooth manifold with boundary $\partial M$ and
interior $M$. Let $\nabla$ be a special affine connection on $M$ which
is projectively compact of order two, let $\Ric_{ab}$ be its Ricci
curvature, and let $\tau\in\Cal E(2)$ be the corresponding defining
density. Then:

(i) $\tau$ is a solution of the first BGG operator if and only if
$\nabla_{(a}\Ric_{bc)}=0$. 

(ii) $\tau$ is a normal solution if and only if
$\nabla_a\Ric_{bc}=0$. Assuming further that $\Ric_{ab}$ is
non--degenerate, it defines a pseudo--Riemannian Einstein--metric on
$M$ with Levi--Civita connection $\nabla$. In this case, $L(\tau)$
defines a non--degenerate bundle metric (necessarily of indefinite
signature) on the standard tractor bundle over $\barm$. This gives
rise to a reduction of projective holonomy to an orthogonal group as
studied in Section 3.3 of \cite{ageom} and in Section 3.1 of
\cite{hol-red}, with the closed curved orbit given by the boundary
$\partial M$ and the open curved orbit given by the interior $M$. In
particular, as shown in these references, the boundary $\partial M$
inherits a conformal structure.
\end{thm}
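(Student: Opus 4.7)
The approach is to expand $L(\tau)$ and $\nabla^{S^2\Cal T^*}L(\tau)$ in the splitting determined by $\nabla$, read off the BGG and normality conditions using the description of $\im(\partial^*)$ from \eqref{eq:S2-subb}, and then interpret the content of these conditions geometrically.

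For part (i), I would start from \eqref{Lform}: in the splitting of $\nabla$, $L(\tau)$ has the form $(\tau,0,\Rho_{ab}\tau)^T$ and $\nabla^{S^2\Cal T^*}_aL(\tau)$ has the form $(0,0,\tau\nabla_a\Rho_{bc})^T$. By the construction reviewed at the end of Section \ref{3.1}, $\tau$ solves the first BGG operator if and only if $\nabla^{S^2\Cal T^*}L(\tau)\in\Ga(\im(\partial^*))$. From \eqref{eq:S2-subb}, an element of $T^*N\otimes S^2\Cal T^*$ whose top two slots already vanish lies in $\im(\partial^*)$ precisely when its bottom slot lies in $\Cal F(2)$, i.e.\ has vanishing total symmetrization. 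The BGG equation therefore reduces to $\tau\nabla_{(a}\Rho_{bc)}=0$, and since $\tau$ is nowhere zero on $M$ and $\Rho_{ab}=\tfrac{1}{n}\Ric_{ab}$ for a special affine connection in dimension $n+1$, this is equivalent to $\nabla_{(a}\Ric_{bc)}=0$.

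For the first assertion of (ii), normality means $L(\tau)$ is parallel for $\nabla^{S^2\Cal T^*}$, which on $M$ reduces via \eqref{Lform} to $\tau\nabla_a\Rho_{bc}=0$, hence to $\nabla_a\Ric_{bc}=0$. If in addition $\Ric_{ab}$ is non--degenerate, then $g_{ab}:=\Ric_{ab}$ is a pseudo--Riemannian metric on $M$ preserved by the torsion--free connection $\nabla$, so $\nabla$ is the Levi--Civita connection of $g$, and the Ricci tensor of $g$ computed from $\nabla$ equals $\Ric_{ab}=g_{ab}$, showing that $g$ is Einstein (and not Ricci--flat). For the tractor metric, I would view $L(\tau)$ as a fibrewise symmetric bilinear form on $\Cal T$: writing a tractor in the splitting of $\nabla$ as a pair $(X^a,s)$ with $X^a\in\Cal E^a(-1)$ and $s\in\Cal E(-1)$, the induced pairing on two tractors is $\tau ss'+\tau\Rho_{ab}X^aX'^b$, which at any interior point where $\tau\neq 0$ and $\Ric$ is non--degenerate is itself non--degenerate. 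Since $L(\tau)$ is parallel on $\barm$, parallel transport propagates non--degeneracy to every point of $\barm$. Indefiniteness of the signature then follows because evaluating $L(\tau)$ on the canonical sub--bundle $\Cal E(-1)\subset\Cal T$ uses only the top slot $\tau$, which vanishes identically on $\partial M$; hence $\Cal E(-1)|_{\partial M}$ is an isotropic line for $L(\tau)$, and no definite form admits a non--zero isotropic vector. A parallel non--degenerate symmetric bilinear form on $\Cal T$ is precisely a reduction of projective tractor holonomy to an orthogonal group $O(p,q)$, so Sections~3.3 of \cite{ageom} and~3.1 of \cite{hol-red} apply to yield the curved orbit decomposition, with $\partial M=\{\tau=0\}$ as the closed curved orbit and the announced conformal structure on the boundary.

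The step I expect to require the most care is propagating non--degeneracy and the isotropy statement from $M$ to all of $\barm$, since the formula \eqref{Lform} is written in a splitting which degenerates at the boundary. The cleanest route is to observe that $L(\tau)$ is defined and parallel on $\barm$ by normality, so a single point of non--degeneracy in $M$ suffices; the boundary behaviour is then controlled by $\tau|_{\partial M}=0$ together with the fact that the sub--bundle $\Cal E(-1)\hookrightarrow\Cal T$ is intrinsic to the projective structure on $\barm$ and needs no splitting to be defined. Once this is in place, the geometric conclusions about curved orbits and the induced conformal structure on $\partial M$ are direct appeals to the holonomy--reduction results of \cite{ageom,hol-red}.
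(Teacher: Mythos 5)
Your proposal is correct and follows essentially the same route as the paper's proof: reading off (i) and the normality statement from \eqref{Lform} together with \eqref{eq:S2-subb}, identifying $g_{ab}=\Ric_{ab}$ as an Einstein metric with Levi--Civita connection $\nabla$, checking non--degeneracy of $L(\tau)$ over $M$ via the orthogonal decomposition $\Cal T\cong\Cal E^a(-1)\oplus\Cal E(-1)$ and propagating it to $\barm$ by parallelism, then citing \cite{ageom,hol-red} for the curved orbit decomposition. Your only genuine addition is spelling out the indefiniteness via isotropy of the canonical line subbundle $\Cal E(-1)|_{\partial M}$ (using that the top slot $\tau$ is splitting--independent and vanishes on $\partial M$), a point the paper leaves implicit in the theorem statement and only records later in Remark \ref{rem3.3}.
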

\begin{proof}
From the formula for $\nabla^{S^2\Cal T^*}L(\tau)$ above, equation
\eqref{eq:S2-subb}, and the fact that $\Rho_{ab}=\tfrac1{n}\Ric_{ab}$,
we immediately get (i) and the first statement in (ii). If in the
setting of (ii) we assume that $\Ric_{ab}$ is non--degenerate, then it
defines a pseudo--Riemannian metric on $M$ which is parallel for
$\nabla$, which thus must be its Levi--Civita connection.  But by
construction, the Ricci curvature of this metric is $\Ric_{ab}$
itself, so we obtain Einstein metric (cf.\ \cite{Armstrong}). 

As noted in Section \ref{3.1}, associated to the choice of the
connection $\nabla$ in the projective class, there is an
identification $\Cal T^*\cong\Cal E_a(1)\oplus\Cal
E(1)$. Correspondingly, the standard tractor bundle decomposes as
$\Cal T\cong\Cal E^a(-1)\oplus\Cal E(-1)$. Viewing sections of
$S^2\Cal T^*$ as bilinear forms on $\Cal T$, the decomposition into
triples we have used has the restrictions to the two summands in the
top and bottom slots and the cross--term in the middle slot. As we
have noted above, in the splitting determined by $\nabla$, we have
$$ 
L(\tau)=\begin{pmatrix} \tau \\ 0\\ \Rho_{ab}\tau \end{pmatrix}.
$$ 

Vanishing of the middle slot shows that over $M$, the decomposition
$\Cal T=\Cal E(1)\oplus\Cal E_a(1)$ determined by $\nabla$ is
orthogonal for the bilinear form $L(\tau)$. Since $\Rho_{ab}$ is a
non--zero multiple of $\Ric_{ab}$, non--degeneracy of $\Ric_{ab}$
implies that the restriction of $L(\tau)$ to both summands is
non--degenerate. This shows that $L(\tau)$ is a non--degenerate
bilinear form on $\Cal T$ over $M$.  Since $L(\tau)$ is parallel, this
is true over all of $\barm$ and we get a holonomy reduction as
claimed. In the references mentioned in the theorem, it is shown that
the curved orbit decomposition is determined by the sign of the
density $\tau$, which implies the last part.
\end{proof}

Again, we can also prove a nice converse to this result:
\begin{thm}\label{thm3.3b}
Let $\barm$ be a smooth manifold with boundary $\partial M$ and
interior $M$. Suppose that $\barm$ is endowed with a projective
structure and that $\nabla$ is a connection in (the restriction of)
the projective class on $M$ which is the Levi--Civita connection of a
non--Ricci--flat Einstein metric or, equivalently, satisfies: 
\begin{itemize}
\item  $\nabla$ is special, i.e.~it preserves a non--zero section
$\kappa$ of $\vol(M)$;
\item $\nabla \Ric^\nabla =0 $, and $\Ric^\nabla$ is
  non-degenerate. 
\end{itemize} 
Suppose further that $\nabla$ does not
smoothly extend to any neighborhood of a boundary point.

Then $\nabla$ is projectively compact of order $\alpha =2$,
$\tau:=\kappa^{-2/(n+2)} $ smoothly extends by zero to a defining
density for $\partial M$, and this extension is a normal solution of
the first BGG--equation on $\mathcal{E}(2)$ on $\barm$. So we are
again in the situation of a holonomy reduction as in part (ii) of
Theorem \ref{thm3.3a}.
\end{thm}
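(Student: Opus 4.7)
The plan is to mirror the strategy of the proof of Theorem \ref{thm3.2b}, but with the splitting operator on $\Cal E(2)$ replacing the one on $\Cal E(1)$; the hypothesis $\nabla\Ric^\nabla=0$ takes over the role of Ricci--flatness, while the additional non--degeneracy of $\Ric^\nabla$ is what will allow us to recover a defining density from a parallel section of $S^2\Cal T^*$ at the boundary.

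First I pass from the parallel volume density $\kappa$ to the scale $\tau:=\kappa^{-2/(n+2)}\in\Ga(\Cal E(2))$, which is parallel for $\nabla$ by the identification $\Cal E(2)=\vol(M)^{-2/(n+2)}$. Written in the splitting determined by $\nabla$, the formulas \eqref{Lform} say that $L(\tau)$ has $\tau$ in the top slot, zero in the middle slot, and $\Rho_{ab}\tau$ in the bottom slot, while $\nabla^{S^2\Cal T^*}_a L(\tau)$ is concentrated in the bottom slot with entry $\tau\nabla_a\Rho_{bc}$. Since $\Rho_{ab}$ is a non--zero constant multiple of $\Ric_{ab}$ and $\nabla\Ric^\nabla=0$, the tractor $I:=L(\tau)$ is parallel on $M$. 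Because the projective structure, and hence $S^2\Cal T^*$ together with its tractor connection, is defined on all of $\barm$, parallel transport extends $I$ to a parallel section on $\barm$; projecting to the top quotient $\Cal E(2)$ yields a smooth extension of $\tau$ to $\barm$, and continuity together with uniqueness of the splitting operator forces $L(\tau)=I$ globally. So $\tau$ is automatically a normal solution of the first BGG--equation on $\Cal E(2)$.

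It remains to verify that this extended $\tau$ is a defining density for $\partial M$. The zero--locus step copies Theorem \ref{thm3.2b}: if $\tau(q)\neq 0$ for some $q\in\partial M$, then $\tau$ is a scale on a neighborhood $U$ of $q$ and hence determines a unique connection in the projective class preserving it; this connection agrees with $\nabla$ on $U\cap M$ and therefore extends $\nabla$ smoothly across $q$, contradicting the hypothesis. The main obstacle, and the point where non--degeneracy of $\Ric^\nabla$ is essential, is to show that $\tilde\nabla_a\tau$ is nowhere zero along $\partial M$ for some (equivalently, any) connection $\tilde\nabla$ in the projective class extending to $\barm$. For this I invoke the argument from Theorem \ref{thm3.3a}(ii): non--degeneracy of $\Ric^\nabla$ makes $L(\tau)$ a non--degenerate bilinear form on the standard tractor bundle $\Cal T$ over $M$, and parallelism of $I$ propagates this non--degeneracy to all of $\barm$. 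In the splitting of $\tilde\nabla$, the matrix of $L(\tau)$ on $\Cal T\cong\Cal E^a(-1)\oplus\Cal E(-1)$ has bottom--right entry $\tau$ and off--diagonal entries proportional to $\tilde\nabla_a\tau$; along $\partial M$ the bottom--right entry vanishes, so non--degeneracy forces $\tilde\nabla_a\tau\neq 0$ at every boundary point, as otherwise both the last row and the last column of the matrix would vanish there. Hence $\tau$ is a defining density, and Proposition \ref{prop2.2}(ii) gives projective compactness of order $\al=2$, placing us in the hypotheses of Theorem \ref{thm3.3a}(ii).
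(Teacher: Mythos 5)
Your proposal is correct and follows essentially the same route as the paper: extend the parallel tractor $L(\tau)$ by parallel transport, copy the zero--locus argument from Theorem \ref{thm3.2b}, and use non--degeneracy of $L(\tau)$ together with isotropy of the line subbundle $\Cal E(-1)\subset\Cal T$ along $\partial M$ to conclude that the cross--term, a non--zero multiple of $\tilde\nabla\tau$, is nowhere vanishing there. Your explicit appeal to Proposition \ref{prop2.2}(ii) for the order of projective compactness merely spells out a step the paper leaves implicit.
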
 
\begin{proof}
By definition, $\tau$ is a section of $\mathcal{E}(2)$ defined on $M$.
Since $\nabla \Ric^\nabla =0$, and $\nabla \tau =0$ it follows that
$H:=L(\tau)$ (as in \eqref{Lform}) is parallel on $M$. Now the
argument follows the proof of Theorem \ref{thm3.2b}, mutatis mutandis,
up to the point that the zero locus of $\tau$ coincides with $\partial
M$. To see that $\tau$ is indeed a defining density, observe that the
projection of $H$ to the quotient bundle $\Cal E(2)$ coincides with
(the extension of) $\tau$, so it vanishes along $\partial M$. In the
proof of Theorem \ref{thm3.3a} we have noted that this describes the
restriction of $H$ to the natural line subbundle $\Cal
E(-1)\subset\Cal T$. Non--degeneracy of $H$ then implies that the
middle slot of $H$ (which describes the cross--term of the bilinear
form) is nowhere vanishing along $\partial M$. Now by \eqref{Lform}
and \eqref{eq:S2-trans} this middle slot is a non--zero multiple of
$\frac{1}{2}\tilde{\nabla}\tau$, where $\tilde\nabla$ is any
connection in the projective class that extends to the boundary.
\end{proof}

Now we can analyze the section $L(\tau)$ in a similar way as for
projective compactness of order one studied in Section \ref{3.2}. This
only works in the setting of an non--Ricci--flat Einstein metric as in
part (ii) of Theorem \ref{thm3.3a}. In this case, we get a converse to
Theorem \ref{thm2.3}.

\begin{prop}\label{prop3.3}
Suppose that $g$ is a projectively compact pseudo--Riemannian Einstein
metric on $M$ with non--zero scalar curvature $R$.

Then for any local defining function $\rho$ for $\partial M$, the
symmetric $\binom02$--tensor field $\rho
g+\tfrac{n(n+1)}{4R}\frac{d\rho^2}{\rho}$ on $TM$ extends smoothly to
the boundary and its boundary value restricts to a non--degenerate
symmetric bilinear form on $T\partial M$. Thus $g$ has an asymptotic
form as in \eqref{asymptotics} with $C=\tfrac{-n(n+1)}{4R}$.
\end{prop}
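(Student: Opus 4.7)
The plan is to leverage Theorem~\ref{thm3.3a}(ii): since $g$ is non--Ricci--flat Einstein, its Levi--Civita connection $\nabla$ is projectively compact of order two and the canonical defining density $\tau\in\Ga(\Cal E(2))$ is a normal solution of the first BGG equation, so $H:=L(\tau)\in\Ga(S^2\Cal T^*)$ is parallel and non--degenerate on all of $\barm$. Since $\hat\nabla:=\nabla+\tfrac{d\rho}{2\rho}$ extends to $\barm$, the associated splitting of $S^2\Cal T^*$ extends smoothly to the boundary, so each slot of $H$ computed in this splitting must be smooth on $\barm$. The strategy is to compute $H$ in the $\nabla$--splitting via \eqref{Lform}, transform to the $\hat\nabla$--splitting via \eqref{eq:S2-trans}, read off the extension from the bottom slot, and obtain non--degeneracy on $T\partial M$ by identifying the boundary restriction with the descent of $H$ to the quotient $X^\perp/X$, where $X\subset\Cal T$ is the canonical line subbundle.

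For the extension, the Einstein condition gives $\Ric_{ab}=\tfrac{R}{n+1}g_{ab}$, and by the formula in Section~\ref{3.1} the projective Schouten tensor is $\Rho_{ab}=\tfrac{R}{n(n+1)}g_{ab}$, so \eqref{Lform} gives $H=(\tau,0,\tfrac{R}{n(n+1)}g_{ab}\tau)$ in the $\nabla$--splitting. Applying \eqref{eq:S2-trans} with $\Up_a=\tfrac{\nabla_a\rho}{2\rho}$, the $\hat\nabla$--splitting of $H$ has middle slot $\tilde\nu_a=\tfrac{1}{2}\hat\tau\,\nabla_a\rho$ and bottom slot
\[
\tilde\rho_{ab}=\tfrac{R}{n(n+1)}g_{ab}\tau+\tfrac{\tau\,\nabla_a\rho\,\nabla_b\rho}{4\rho^2}
=\tfrac{R\hat\tau}{n(n+1)}\Bigl(\rho g_{ab}+\tfrac{n(n+1)}{4R}\tfrac{\nabla_a\rho\,\nabla_b\rho}{\rho}\Bigr),
\]
where $\hat\tau:=\tau/\rho$ is a nowhere vanishing parallel section of $\Cal E(2)$ for $\hat\nabla$ on $\barm$ by Proposition~\ref{prop2.2}. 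Smoothness of $\tilde\rho_{ab}$ on $\barm$, combined with $R\hat\tau$ being smooth and nowhere vanishing, forces the bracketed tensor to extend smoothly to $\barm$, yielding the asserted asymptotic form with $C=-\tfrac{n(n+1)}{4R}$.

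For non--degeneracy at $p\in\partial M$, note that $H(X,X)=\tau$ vanishes at $p$, so $X_p$ is null for $H_p$. Because $\tilde\nu_a(p)=\tfrac{1}{2}\hat\tau(p)\nabla_a\rho(p)$ is non--zero ($\hat\tau$ is nowhere vanishing and $d\rho\neq 0$ on $\partial M$), the subspace $X_p^\perp\subset\Cal T_p$ is cut out by $V^a\nabla_a\rho(p)=0$, and hence the quotient $X_p^\perp/X_p$ is canonically identified with $T_p\partial M$ tensored with $\Cal E(-1)|_p$; a short computation shows the induced bilinear form on this quotient is precisely the restriction of $\tilde\rho_{ab}(p)$ to $T_p\partial M$. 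Since $H_p$ is non--degenerate and $X_p$ is null, $X_p^\perp\cap(X_p^\perp)^\perp=X_p^\perp\cap X_p=X_p$, so the induced form on $X_p^\perp/X_p$ is non--degenerate; given the scalar factor in the previous display, this yields non--degeneracy of $\rho g+\tfrac{n(n+1)}{4R}\tfrac{d\rho^2}{\rho}$ on $T_p\partial M$. The main obstacle is precisely this identification of the boundary restriction of the extended tensor with the descent of $H$ to $X^\perp/X$, which is what converts smoothness of the extension into non--degeneracy; the rest is direct substitution into \eqref{Lform} and \eqref{eq:S2-trans}.
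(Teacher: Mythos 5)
Your proposal is correct and follows essentially the same route as the paper: compute $L(\tau)$ in the splitting determined by $\nabla$ via \eqref{Lform} with $\Rho_{ab}=\tfrac{R}{n(n+1)}g_{ab}$, transform to the splitting of $\hat\nabla=\nabla+\tfrac{d\rho}{2\rho}$ via \eqref{eq:S2-trans}, read off the extension of $h_{ab}=\rho g_{ab}+\tfrac{n(n+1)}{4R}\tfrac{d\rho^2}{\rho}$ from the bottom slot, and deduce non--degeneracy on $T\partial M$ from non--degeneracy of the parallel tractor metric along the boundary. Your explicit linear--algebra argument on the quotient $X_p^\perp/X_p$ merely spells out the equivalence the paper states in one line (and elaborates in Remark \ref{rem3.3}), so there is no substantive difference.
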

\begin{proof}
From Theorem \ref{thm3.3b}, we know that the order of projective
compactness is $\al=2$ and we are in the situaiton of part (ii) of
Theorem \ref{thm3.3a}. From the proof of that theorem we see that, in
the splitting determined by $\nabla$, we have
$$ 
L(\tau)=\begin{pmatrix} \tau \\ 0\\ \tfrac{1}{n(n+1)}Rg_{ab}\tau \end{pmatrix},
$$ where we have used that
$\Rho_{ab}=\tfrac{1}{n}\Ric_{ab}=\tfrac{1}{n(n+1)}Rg_{ab}$ for an
Einstein metric. Now we compute the expression for $L(\tau)$ in the
splitting determined by the connection
$\hat\nabla=\nabla+\frac{d\rho}{2\rho}$, which extends to the
boundary. By formula \eqref{eq:S2-trans}, this is given by
$$
\begin{pmatrix} \tau
  \\ \tau\tfrac{d\rho}{2\rho}\\ 
(\tfrac{1}{n(n+1)}Rg_{ab}+\frac{d\rho^2}{4\rho^2})\tau \end{pmatrix}. 
$$ 
Of course the top slot vanishes along $\partial M$.  As we have noted
in Section \ref{2.2}, $\frac{\tau}{\rho}=:\hat\tau$ is a section of
$\Cal E(2)$, which is parallel for $\hat\nabla$, and thus is nowhere
vanishing on the domain of definition of $\rho$. Consequently, the
middle slot of this expression approaches a non--zero multiple of
$d\rho$ in each point of the boundary. The bottom slot is given by
$\hat\tau\tfrac{R}{n(n+1)}h_{ab}$, where
$$ 
h_{ab}=\rho g_{ab}+\tfrac{n(n+1)}{4R}\tfrac{d\rho^2}{\rho}
$$ 
so $h_{ab}$ has to extend to the boundary. Then the fact that
$L(\tau)$ remains non--degenerate along the boundary is equivalent to
the fact that the restriction of $h_{ab}$ to the kernel of $d\rho$ is
non--degenerate along the boundary. Since $\rho$ is a defining
function, this kernel is $T\partial M\subset T\barm|_{\partial M}$.
\end{proof}

\begin{remark}\label{rem3.3}
In the situation of a holonomy reduction as in part (ii) of Theorem
\ref{thm3.3a}, the general theory developed in \cite{hol-red} implies
that the restriction of the projective standard tractor bundle $\Cal
T$ of $\barm$ to $\partial M$ together with the bundle metric defined
by $L(\tau)$ can be identified with the conformal standard tractor
bundle for the conformal structure on $\partial M$ induced by the
holonomy reduction. Now $\Cal T|_{\partial M}$ inherits a finer
filtration from $L(\tau)$ since the distinguished line subbundle is
isotropic for $L(\tau)$ over $\partial M$ and thus contained in its
orthocomplement. It is well known that the quotient of these two
bundles is isomorphic to the tangent bundle twisted by a density
bundle and that the conformal metric coincides with the bundle metric
on this quotient induced by the tractor metric. Now the proof above
also shows that this induced metric in some scale is represented by
the restriction of a non--zero multiple of $h_{ab}$ to $T\partial
M$. This shows that in the case of a holonomy reduction, the conformal
structure on $\partial M$ induced by the asymptotic form obtained in
Proposition \ref{prop3.3} (see the discussion before Theorem
\ref{thm2.3}) coincides with the one induced by the holonomy
reduction.
\end{remark}

\subsection{The case of Ricci--flat metrics}\label{3.4}
Suppose that we have given a Ricci--flat pseudo--Riemannian metric $g$
on $M\subset\barm$, let $\nabla$ be its Levi--Civita connection and
consider $\si:=\vol(g)^{-1/(n+2)}\in\Ga(\Cal E(1))$. Then by Theorem
\ref{thm3.2a}, $\si$ satisfies the first BGG equation defined by
\eqref{1stBGG1}. Likewise, a slight variant of Theorem \ref{thm3.3a}
shows that $\tau : =\si^2\in\Ga(\Cal E(2))$ has to be a normal solution
of the first BGG equation defined on the bundle $\Cal E(2)$. This is
easy to explain: From the proof of Theorem \ref{thm3.2a}, we see that
$\si$ is automatically a normal solution, so $s=L(\si)$ is a parallel
section of $\Cal T^*$. But then $s\odot s$ is a parallel section of
the tractor bundle $S^2\Cal T^*$ and thus determines a normal solution
of the corresponding first BGG equation, which is clearly given by
$\si^2$.

\begin{remark}\label{rem3.4a} Of course $\tau = \si^2$ cannot be a boundary 
defining density. Nevertheless the other observations suggest that
perhaps there could be two natural notions of a projective
compactification for a Ricci flat metric, corresponding to projective
compactness of order one and order two, respectively.  However by
Theorem \ref{thm3.2b} a projectively compact Ricci flat metric is
necessarily of order $\alpha =1$. That a Ricci--flat metric $g$ on $M$
cannot be projectively compact of order two can also be seen directly
as follows.

Assuming that $g$ is projectively compact of order two, consider the
natural defining density $\tau:=\vol(g)^{-2/(n+2)}\in\Ga(\Cal E(2))$
for $\partial M$. Then from Section \ref{3.3} we know that the section
$L(\tau)$ of $S^2(\Cal T^*)$ is parallel and in the splitting defined
by $\nabla$ it has $\tau$ in the top slot while the other two slots
are identically zero. Thus, as a bilinear form of $\Cal T$, $L(\tau)$
has rank one over $M$, and since it is parallel, this holds over all
of $\barm$. As in the proof of Proposition \ref{prop3.3} we can next
compute $L(\tau)$ in the splitting corresponding to the connection
$\hat\nabla=\nabla+\frac{d\rho}{2\rho}$ which by assumption extends to
the boundary. This is given by
$$
\begin{pmatrix} \tau
  \\ \tau\tfrac{d\rho}{2\rho}\\ \frac{d\rho^2}{4\rho^2}\tau \end{pmatrix}.
$$ But as before, $\hat\tau=\frac{\tau}{\rho}$ is parallel for
$\hat\nabla$ and thus extends to the boundary with non--zero boundary
value. The same holds for $d\rho$ and hence $d\rho^2=d\rho\odot d\rho$
extends to the boundary with non--zero boundary value. As before, the
middle slot is just $\tfrac{\hat\tau}{2}d\rho$, so this is fine, but
the bottom slot is $\tfrac{\hat\tau}{4\rho}d\rho^2$, which cannot
extend, so we obtain a contradiction.
\end{remark}

There are nice cases of Ricci flat metrics which are projectively
compact (of order one). The simplest example of this situation is
provided by the homogeneous model of projective geometry, see Section
\ref{3.1}. Consider the sphere $S^{n+1}$ as the ray projectivization
of $\Bbb R^{n+2}\setminus\{0\}$. Recall from Section \ref{3.1} that a
local scale for this projective structure is determined by a local
smooth section of the ray projectivization, and the corresponding
connection is the pullback of the flat connection. In particular, the
embedding of $S^{n+1}$ as the unit sphere of $\Bbb R^{n+2}$ is a
global section and the corresponding pullback connection is just the
Levi--Civita connection of the round metric on $S^{n+1}$. On the other
hand, we can define a local section over an open hemisphere by mapping
the round hemisphere to an affine hyperplane in $\Bbb R^{n+2}$ via
central projection. The resulting connection is then the pullback of
the flat connection on that affine hyperplane. The latter is the Levi
Civita connection for the flat metric on $\Bbb R^{n+1}$ of any chosen
signature. Moreover, the corresponding scale is just given by the
restriction of a fixed linear functional on $\Bbb R^{n+2}$, which, as
we have seen in Section \ref{3.1}, corresponds to a parallel standard
cotractor on $S^{n+1}$. In particular, we can pass to the closed
hemisphere and then the section of $\Cal E(1)$ underlying this
parallel cotractor is a defining density for the boundary sphere
$S^n$. This shows that the flat metric on a hemisphere obtained via
central projection is projectively compact of order one.

\subsection{Projectively compact Ricci flat metrics}\label{3.5} 
To proceed with the
analysis of this case we have to involve a new ingredient, namely the
so--called projective metricity equation. This is the first BGG
equation associated to the bundle $S^2\Cal T$, the dual of the tractor
bundle giving rise to the first BGG equation on $\Cal E(2)$ as studied
in Section \ref{3.3}. The relation between the first BGG equations
determined by the two bundles is much more complicated than mere
duality, however. The metricity equation is discussed in \cite{CGM} in
a way closely analogous to the discussion in Section \ref{3.1}, and we
take some information from there. The natural quotient bundle of
$S^2\Cal T$, on which the first BGG equation is defined is the bundle
$\Cal E^{(ab)}(-2)$, a weighted version of the bundle of symmetric
bilinear forms on the cotangent bundle.

The main information we need at this place concerns a manifold $N$ of
dimension $n+1$ endowed with a projective structure containing the
Levi--Civita connection of a pseudo--Riemannian metric $g$. Then
putting $\si:=\vol(g)^{-1/(n+2)}\in\Ga(\Cal E(1))$, and denoting by
$g^{-1}\in\Ga(S^2TN)$ the inverse of $g$, the section
$\si^{-2}g^{-1}\in\Ga(S^2TN(-2))$ is a solution of this first BGG
operator. (This is an easy consequence of the fact that it is parallel
for the connection $\nabla$ from the projective class and the BGG
operator in this case is of order one.)  In \cite{CGM} it is shown
that this solution is normal if and only if $g$ is Einstein, so in
that case $L(\si^{-2}g^{-1})$ is a parallel section of $S^2\Cal T$,
and hence can be interpreted as a parallel (degenerate) bundle metric
on the standard cotractor bundle $\Cal T^*$ (cf.\ \cite[Theorem
  3.1]{GoMac}).

Similarly as in Section \ref{3.3} it is easy to describe
$L(\si^{-2}g^{-1})$ in the splitting determined by $\nabla$. If $g$ is
Ricci flat (indeed, scalar flat is sufficient for this), then it has
$\si^{-2}g^{-1}$ in the projecting slot and $0$ in both other
slots. This immediately implies that, as a bilinear form on $\Cal
T^*$, the section $L(\si^{-2}g^{-1})$ has (constant) rank $n+1$
(i.e.~corank one). Moreover, from Section \ref{3.2}, we see that the
parallel section $L(\si)\in\Ga(\Cal T^*)$ corresponding to $\si$ is
concentrated in the projecting slot, which immediately implies that it
spans the null--space of the degenerate bilinear tractor form
$L(\si^{-2}g^{-1})$.

Returning to our usual setting, these observations suffice to describe
the structure on the boundary induced by a projectively compact Ricci
flat metric in the interior.

\begin{thm}\label{thm3.5}
  Let $\barm$ be a smooth manifold of dimension $n+1$ with boundary
  $\partial M$ and interior $M$, and suppose that $g$ is a
  projectively compact Ricci flat pseudo--Riemannian metric of
  signature $(p,q)$ on $M$. Then the order of projective compactness
  is one and the induced projective structure on $\partial M$, from
  Theorem \ref{thm3.2a}, canonically inherits a holonomy reduction to
  the group $SO(p,q)\subset SL(n+1,\Bbb R)$.
\end{thm}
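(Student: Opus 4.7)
The plan is to combine the two parallel tractors that are forced upon us. From Theorem \ref{thm3.2a} applied to the Ricci--flat metric $g$, we already know that $\al=1$ and that $\si:=\vol(g)^{-1/(n+2)}\in\Ga(\Cal E(1))$ is a normal solution of the first BGG equation on $\Cal E(1)$, so that $L(\si)\in\Ga(\Cal T^*)$ is parallel on all of $\barm$. The preamble to this subsection supplies a second parallel tractor: since Ricci--flatness entails vanishing scalar curvature (and in particular $g$ is Einstein), the projective metricity solution $\si^{-2}g^{-1}\in\Ga(S^2TM(-2))$ is normal, so $H:=L(\si^{-2}g^{-1})\in\Ga(S^2\Cal T)$ is parallel on $M$ and extends uniquely by parallel transport to all of $\barm$.

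The first key step is to read off the geometric relationship between these two parallel tractors. In the splitting determined by $\nabla$ on $M$, formula \eqref{split1} gives $L(\si)=\binom{\si}{0}$, while, as recorded in the preamble, $H$ has only its projecting slot $\si^{-2}g^{ab}\in\Ga(\Cal E^{(ab)}(-2))$ non--zero. A direct calculation with the pairing between $S^2\Cal T$ and $S^2\Cal T^*$ then shows that, interpreted as a symmetric bilinear form on $\Cal T^*$, $H$ has constant corank one on $M$, with null line spanned precisely by $L(\si)$. Since $H$ and $L(\si)$ are both parallel and nowhere zero, this configuration (corank one, null line equal to $\langle L(\si)\rangle$) propagates to all of $\barm$.

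The second step is to descend to $\partial M$. By Theorem \ref{thm3.2a} together with Theorem 3.1 of \cite{ageom}, the curved orbit determined by $L(\si)$ identifies $\partial M$ as a totally geodesic hypersurface inheriting a projective structure whose standard cotractor bundle is naturally the quotient $\Cal T^*|_{\partial M}/\langle L(\si)|_{\partial M}\rangle$ (equivalently, whose standard tractor bundle is the rank $n+1$ subbundle $\ker(L(\si))|_{\partial M}\subset\Cal T|_{\partial M}$), and whose tractor connection descends from $\nabla^{\Cal T^*}$. Because the null line of $H|_{\partial M}$ coincides with $\langle L(\si)|_{\partial M}\rangle$, the parallel bilinear form $H|_{\partial M}$ on $\Cal T^*|_{\partial M}$ descends to a non--degenerate symmetric bilinear form on the boundary cotractor bundle, equivalently to a non--degenerate fibrewise metric on the standard tractor bundle of $\partial M$. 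This induced tractor metric is parallel for the boundary tractor connection (because $L(\si)$ and $H$ are both parallel on $\barm$) and has signature $(p,q)$, which is read off the $\nabla$--splitting on $M$ from the sole non--zero slot $g^{ab}$. A parallel non--degenerate tractor metric of signature $(p,q)$ on the rank $n+1$ standard tractor bundle of $\partial M$ is precisely the datum of a holonomy reduction of the projective structure on $\partial M$ to $SO(p,q)\subset SL(n+1,\Bbb R)$.

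The main obstacle I anticipate is the bookkeeping at the boundary: one must verify that the bilinear form induced from $H|_{\partial M}$ really is parallel for the intrinsic tractor connection of the projective structure on $\partial M$, rather than merely for the restriction of the ambient connection on $\barm$. This is the standard output of the curved orbit formalism of \cite{ageom,hol-red}, applied here to the compatible pair $\{L(\si),H\}$ of parallel tractors on $\barm$, so it should fall out cleanly once the identifications in the previous paragraph are set up carefully.
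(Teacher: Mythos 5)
Your proposal follows essentially the same route as the paper's proof: extend the parallel tractor $H=L(\si^{-2}g^{-1})\in\Ga(S^2\Cal T)$ (normal by \cite{CGM}, since a Ricci--flat metric is in particular Einstein) from $M$ to all of $\barm$, observe that it has corank one with null line spanned by the parallel cotractor $L(\si)$, identify the standard tractor bundle of the induced boundary projective structure with $\ker L(\si)|_{\partial M}\subset\Cal T|_{\partial M}$ (dually, the cotractor bundle with the quotient of $\Cal T^*|_{\partial M}$ by the line spanned by $L(\si)$), and descend $H$ to a parallel non--degenerate tractor metric of signature $(p,q)$, which is exactly the asserted holonomy reduction. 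One slip needs repair: the assertion that the order of projective compactness is one does \emph{not} come from Theorem \ref{thm3.2a}, which presupposes order one; it is the converse result, Theorem \ref{thm3.2b}, that forces $\al=1$ (it applies because projective compactness of any order prevents $\nabla$ itself from extending past the boundary --- if both $\nabla$ and $\nabla+\tfrac{d\rho}{\al\rho}$ extended, their difference $\tfrac{d\rho}{\al\rho}$ would extend, which it does not). Beyond that, you defer to the curved-orbit formalism the verification that $\ker L(\si)|_{\partial M}$ genuinely carries the structure of the boundary standard tractor bundle: normality of the restricted connection is indeed \cite[Theorem 3.1]{ageom}, as you anticipated, but the paper also performs a short explicit computation --- using $L(\si)$, the conormal bundle $\Cal N$, and the isomorphism $\vol(\barm)|_{\partial M}\cong\Cal N\otimes\vol(\partial M)$ --- to identify $\Cal E(1)|_{\partial M}$ with the bundle of projective weight-one densities intrinsic to $\partial M$, so that the composition series of $\ker L(\si)|_{\partial M}$ matches that of a projective standard tractor bundle; your sketch elides this bookkeeping, but it goes through exactly as you predict.
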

\begin{proof}
  By Theorem \ref{thm3.2b} the metric is projectively compact of order
  one and $\si:=\vol(g)^{-1/(n+2)}\in\Ga(\Cal E(1))$ is a defining
  density for $\partial M$. Consider the solution $\si^{-2}g^{-1}$ of
  the metricity equation on $M$. As discussed above, the corresponding
  section $L(\si^{-2}g^{-1})$ of $S^2\Cal T$ is parallel over $M$, so
  since the projective structure extends to the boundary, it extends
  to a parallel section over all of $\barm$. As a bilinear form on
  $\Cal T^*$, $L(\si^{-2}g^{-1})$ has rank $n+1$ over $M$, so this
  also holds on the boundary. Moreover, the parallel section
  $L(\si)\in\Ga(\Cal T^*)$ spans the null space of $L(\si^{-2}g^{-1})$
  over $M$, and again this continues to hold over $\barm$. Finally, we
  know from the proof of Proposition \ref{prop3.2} that
  $L(\si)=\binom{0}{\frac{\si}{\rho}d\rho}$ along the boundary.

Using this, we can now nicely describe the induced projective
structure on $\partial M$ as the kernel of $L(\si)$. Indeed, since
$L(\si)$ is nowhere vanishing, its kernel defines a smooth corank one
subbundle $\tilde{\Cal T}\subset \Cal T|_{\partial M}$. Moreover, along
the boundary, $L(\si)$ defines a section of $T^*M\otimes\Cal E(1)\cong
T^*M\otimes\vol(\barm)^{-1/(n+2)}$ whose pointwise kernel is
$T(\partial M)\otimes\vol(\barm)^{-1/(n+2)}$. Denoting by $\Cal N$ the
conormal bundle of the boundary, we have obtained a section of $\Cal
N\otimes\vol(\barm)^{-1/(n+2)}$. But of course,
$\vol(\barm)|_{\partial M}\cong\Cal N\otimes\vol(\partial M)$, so
$\Cal N\cong\vol(\partial M)^{-1}\otimes\vol(\barm)|_{\partial M}$,
and we can interpret $L(\si)$ as a non--vanishing section of
$\vol(\partial M)^{-1}\otimes\vol(\barm)^{(n+1)/(n+2)}|_{\partial
  M}$. This section identifies $\vol(\partial M)$ with
$\vol(\barm)^{(n+1)/(n+2)}|_{\partial M}$. Taking the power of this
of order $-1/(n+1)$ we obtain an isomorphism of $\Cal E(1)|_{\partial
  M}$ with the space of densities of projective weight one on
$\partial M$.

Thus we conclude that $\tilde{\Cal T}\to\partial M$ is a bundle of
rank $n+1$ which inherits the appropriate composition series for a
projective standard tractor bundle. Since $L(\si)$ is parallel, the
standard tractor connection on $\Cal T$ restricts to a connection on
the vector bundle $\tilde{\Cal T}$, and in \cite[Theorem 3.1]{ageom}
it is shown that this restriction is normal. Hence we can view
$\tilde{\Cal T}$ with the standard tractor bundle of the induced
projective structure on $\partial M$.

By duality, the standard cotractor bundle $\tilde{\Cal T^*}$ for this
structure can be identified with the quotient of $\Cal T^*|_{\partial
  M}$ by the line spanned by $L(\si)$. But then we know that
$L(\si^{-2}g^{-1})$ descends to a non--degenerate bundle metric on
this quotient bundle, which has the same signature as $g$ and by
construction is parallel for the induced connection. Hence the inverse
defines a non--degenerate parallel metric of signature $(p,q)$ on the
standard tractor bundle, thus giving rise to the claimed holonomy
reduction. 
\end{proof}

\begin{remark}\label{rem3.5a}
Note that the Theorem statement above could be strengthened without
adjusting the proof. Rather than requiring the Ricci-flat Levi-Civita connection
to be projectively compact it would be sufficient to assume that its
projective class extends to the boundary, while the connection itself
does not (along the lines of Theorem \ref{thm3.2b}).
\end{remark} 

Projective holonomy reductions to orthogonal groups have been studied
in detail in Section 3.2 of \cite{ageom} and in Section 3.1 of
\cite{hol-red} and we use the results obtained there. If we start with
a Riemannian metric $g$, then the reduction will be to the orthogonal
group $SO(n+1)\subset SL(n+1,\Bbb R)$ and this amounts to a positive
Einstein Riemannian metric in the projective class. If the initial
metric is pseudo--Riemannian of signature $(p,q)$ with $p,q>0$, then
the holonomy reduction induces the so--called \textit{curved orbit
  decomposition} $\partial M=\partial M_+\cup \partial M_0\cup\partial
M_-$ with $\partial M_\pm$ open in $\partial M$, while $\partial M_0$
(if non--empty) is an embedded hypersurface, which separates $\partial
M_+$ and $\partial M_-$. On $\partial M_\pm$ the holonomy reduction
determines Einstein metrics in the projective class of signature
$(p-1,q)$ and $(p,q-1)$, respectively. On $\partial M_0$, one obtains
a well defined conformal structure of signature $(p-1,q-1)$ whose
normal conformal standard tractor bundle with its canonical connection
coincides with the restriction of $\tilde{\Cal T}$. We shall see below
how to describe this decomposition explicitly.

\medskip

Let us analyze the orbit decomposition in the case of the homogeneous
model. As in Section \ref{3.4} we consider the flat connection on an
open hemisphere in $S^{n+1}$ obtained via central projection to an
affine hyperplane, and this is projectively compact (of order one) on
the closed hemisphere. The corresponding parallel standard cotractor
$I=L(\si)$ is described by the functional whose kernel projectivizes
to the boundary sphere $S^n$. Now the flat connection on an affine
hyperplane is the Levi--Civita connection of the flat metric of any
signature, and we consider a metric of Lorentzian signature
$(n,1)$. This metric is encoded as a parallel bilinear form on the
standard cotractor bundle whose null space is spanned by $I$.

In the case of the homogeneous model, this corresponds to a fixed
element $H$ of $S^2\Bbb R^{n+2}$, which has rank $n+1$ with null space
spanned by $I$. Then $H$ descends to a non--degenerate bilinear form
on $\Bbb R^{(n+2)*}/(\Bbb R I)$. This is the dual space of $\ker(I)$
so we can view the inverse $H^{-1}$ as a non--degenerate bilinear form
on $\ker(I)$. Now of course, the boundary sphere $S^n$ can be viewed
as the ray projectivization of $\ker(I)\setminus\{0\}$ and $H^{-1}$
describes a parallel section of the symmetric square of the cotractor
bundle for the resulting flat projective structure.  Now the orbit
decomposition is described in \cite{ageom} and \cite{hol-red}. It
exactly corresponds to the restriction of $H^{-1}$ to a ray being
positive definite, negative definite, and zero, respectively. So these
are just the points at infinity reached by space--like, respectively
time--like, respectively null lines in the original Lorentzian vector
space. The open curved orbits are the spaces of positive respectively
negative rays and thus the standard models for hyperbolic spaces of
the appropriate signature. The closed curved orbit consists of two
copies of the sphere $S^{n-1}$ viewed as the ray projectivized light
cone of a Lorentzian metric, so each of the two copies is a
homogeneous model of conformal geometry in Riemannian signature.

\begin{remark}\label{rem3.5b} 
  For emphasis, we point out here that the projective compactification
  of Minkowski space is very different from the usual conformal
  compactification of Minkowski space, which conformally embeds
  Minkowski space into a subspace of the Einstein cone. (This
  compactification is due to Penrose, see
  e.g. \cite{Fr,Penrose125}). Whereas here in the projective
  compactification the set of points at infinity reached by
  space--like geodesic rays is an open set, by contrast in the
  conformal compactification all such rays end in a point of the
  conformal infinity. It is similar for future time--like rays, and
  past time--like rays. They end in open caps of the boundary sphere
  $S^n$ of the projective
  compactification, while they end respectively in the two points
  known as ``future and past time-like infinity'' in the conformal
  compactification. In the projective compactification the end points
  of future directed null rays form an $S^{n-1}$ whereas in the
  conformal compactification the ``future null infinity'' is open in
  the boundary (and so of dimension $n$). Again it is similar for past
  null rays.
\end{remark}

\subsection{Explicit form of curved orbit decomposition}\label{3.6} 
To obtain an explicit description of the curved orbits in general, we
have to analyze the parallel section $L(\si^{-2}g^{-1})$ determined by
a Ricci flat metric which is projectively compact of order one in more
detail. Dual to the description of $S^2\Cal T^*$ in Section \ref{3.1},
a choice of connection in the projective class identifies the bundle
$S^2\Cal T$ with the direct sum $\Cal E(-2)\oplus\Cal
E^a(-2)\oplus\Cal E^{(ab)}(-2)$. Dualizing \eqref{eq:S2-trans}, we see
how this identification changes when passing from $\nabla_a$ to
$\hat\nabla_a=\nabla_a+\Up_a$: 
\begin{equation}
  \label{eq:S2*-trans}
\begin{pmatrix} \tau^{ab} \\ \la^a \\ \nu \end{pmatrix}\mapsto 
\begin{pmatrix} \tau^{ab} \\ \la^a-\tau^{ab}\Up_b
  \\ \nu-2\la^a\Up_a+\tau^{ab}\Up_a\Up_b\end{pmatrix}
\end{equation}

\begin{thm}\label{thm3.6}
  Let $\barm$ be a smooth manifold of dimension $n+1$ with boundary
  $\partial M$ and interior $M$, and let $g$ be a projectively compact
  Ricci--flat pseudo--Riemannian metric on $M$. Let $\rho:U\to\Bbb
  R_{\geq 0}$ be a local defining function for $\partial M$, and let
  us write $\rho_a$ for the one--form $d\rho$.

(i) The section $\rho^{-2}g^{ab}$ of $S^2TM$ extends smoothly to the
boundary and the boundary value $\tau^{ab}$ satisfies
$\tau^{ab}\rho_b=0$. Moreover, the curved orbits $\partial M_\pm$
consist of those points of $M$ in which the bilinear form $\tau^{ab}$
has rank $n$, while in points of $\partial M_0$ it has rank $n-1$.

(ii) The section $\rho^{-3}g^{ab}\rho_b$ of $TM$ extends smoothly to
the boundary, and the boundary value $\la^a$ satisfies
$\la^a\rho_a=0$. 

(iii) The function $\rho^{-4}g^{ab}\rho_a\rho_b$ on $M$ extends smoothly
to the boundary.  
\end{thm}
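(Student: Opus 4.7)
The plan is to exploit the parallel section $L(\sigma^{-2}g^{-1})\in\Ga(S^2\Cal T)$ from Section \ref{3.4}, together with the transformation formula \eqref{eq:S2*-trans} expressing it in different splittings. In the splitting determined by the Levi--Civita connection $\nabla$, Ricci--flatness gives the triple $(\sigma^{-2}g^{ab},0,0)$. Applying \eqref{eq:S2*-trans} with $\Upsilon_a=\rho_a/\rho$ yields, in the splitting determined by the extended connection $\hat\nabla=\nabla+\rho_a/\rho$,
\[
L(\sigma^{-2}g^{-1})=\bigl(\sigma^{-2}g^{ab},\ -\sigma^{-2}g^{ab}\rho_b/\rho,\ \sigma^{-2}g^{ab}\rho_a\rho_b/\rho^{2}\bigr).
\]
Since this tractor is parallel for a tractor connection that extends smoothly to $\barm$, each of the three slots is smooth up to the boundary in this splitting.

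Writing $\sigma=\hat\sigma\rho$, where $\hat\sigma:=\sigma/\rho$ is parallel for $\hat\nabla$, smooth, and nowhere vanishing up to $\partial M$ (cf.\ Proposition \ref{prop2.2} and Section \ref{3.2}), one has $\sigma^{-2}=\hat\sigma^{-2}\rho^{-2}$. Dividing each slot by the nowhere--vanishing density $\hat\sigma^{-2}$ then produces the smooth extensions of $\rho^{-2}g^{ab}$, $\rho^{-3}g^{ab}\rho_b$, and $\rho^{-4}g^{ab}\rho_a\rho_b$ required in (i), (ii), (iii). The algebraic identities $\tau^{ab}\rho_b=0$ and $\lambda^a\rho_a=0$ on $\partial M$ follow immediately from $\rho^{-2}g^{ab}\rho_b=\rho\cdot(\rho^{-3}g^{ab}\rho_b)$ and $\rho^{-3}g^{ab}\rho_a\rho_b=\rho\cdot(\rho^{-4}g^{ab}\rho_a\rho_b)$, each of which is smooth up to $\partial M$ and vanishes there because $\rho$ does.

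For the rank statement in (i), I would use that $L(\sigma^{-2}g^{-1})$ has constant corank one as a symmetric bilinear form on $\Cal T^*$, with null space spanned by $L(\sigma)$ (Section \ref{3.5}), and that by the proof of Proposition \ref{prop3.2}(i), $L(\sigma)|_{\partial M}=(0,\hat\sigma\rho_a)$ lies in the injecting subspace $W:=\Cal E_a(1)|_x\subset\Cal T^*|_x$ at each boundary point. Since $\tau^{ab}$ is precisely the restriction of $L(\sigma^{-2}g^{-1})|_x$ to the $(n+1)$--dimensional space $W$, its kernel always contains $\Bbb R\hat\sigma\rho_a$. A short linear--algebra argument, exploiting that the global null space of $L(\sigma^{-2}g^{-1})|_x$ has dimension exactly one, shows that the kernel of $\tau^{ab}|_W$ is one--dimensional precisely when the functional $\mu\mapsto\lambda^a\mu_a$ vanishes on $\ker(\tau^{ab}|_W)$ (giving rank $n$), and is two--dimensional otherwise (giving rank $n-1$). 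Comparing this dichotomy with the description of the curved orbits in \cite{ageom,hol-red} (as recalled in Section \ref{3.5}, where the orbit type is governed by the causal position of a distinguished line in the induced cotractor bundle of $\partial M$) identifies the rank--$n$ locus with $\partial M_\pm$ and the rank--$(n-1)$ locus with $\partial M_0$.

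The main obstacle is this final step: matching the linear--algebra dichotomy for the rank of $\tau^{ab}$ to the geometric curved orbit decomposition. This requires identifying the bilinear form descended to $\tilde{\Cal T}^*=\Cal T^*|_{\partial M}/\Bbb R L(\sigma)$ with the cotractor metric of the induced projective--Einstein structure on $\partial M$, and then extracting the orbit type from the triple $(\tau^{ab},\lambda^a,\nu)$ using the holonomy--reduction description of the orbits in the references. The smoothness and algebraic--vanishing parts, in contrast, follow automatically from writing $L(\sigma^{-2}g^{-1})$ in the $\hat\nabla$--splitting.
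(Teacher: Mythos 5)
Your proposal follows essentially the same route as the paper: both compute $L(\si^{-2}g^{-1})$ in the splitting of $\hat\nabla=\nabla+\tfrac{d\rho}{\rho}$ via \eqref{eq:S2*-trans}, pull out the nowhere--vanishing factor $\si^{-2}\rho^2$ to obtain the three smooth extensions together with $\tau^{ab}\rho_b=0$ and $\la^a\rho_a=0$, and reduce the curved--orbit statement to the size of the kernel of $\tau^{ab}$ relative to the line spanned by $\rho_a$. The final step you flag as the main obstacle is resolved in the paper exactly as you outline: the identification $\tilde{\Cal T}^*\cong\Cal T^*|_{\partial M}/\Bbb R\, L(\si)$ is already available from Theorem \ref{thm3.5}, and Theorem 3.1 of \cite{hol-red} shows that $\partial M_0$ is precisely the locus where the distinguished line in $\tilde{\Cal T}$ is isotropic, equivalently where the null space of $\tau^{ab}$ is strictly larger than $\Bbb R\rho_a$ (rank $n-1$), so your linear--algebra dichotomy (which is correct, though equivalent to simply asking whether $\ker\tau^{ab}=\Bbb R\rho_a$) matches the paper's conclusion.
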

\begin{proof} As before, we write $\si:=\vol(g)^{-1/(n+2)}$. Then from
above we know that in the splitting determined by $\nabla$, the
parallel section $L(\si^{-2}g^{-1})$ has $\si^{-2}g^{ab}$ in the top
slot and zero in the other two slots. Now take a local defining
function $\rho:U\to\Bbb R_{\geq 0}$ for the boundary, and pass to the
splitting defined by $\hat\nabla=\nabla+\frac{d\rho}{\rho}$ which
extends to the boundary. Using \eqref{eq:S2*-trans}, we see that in
this splitting  
$$ L(\si^{-2}g^{-1})=\begin{pmatrix} \si^{-2}g^{ab}
\\ -\si^{-2}\rho^{-1}g^{ab}\rho_b
\\ \si^{-2}\rho^{-2}g^{ab}\rho_a\rho_b
\end{pmatrix}. 
$$ Since $\si/\rho$ has a finite non--zero limit to the boundary, the
same holds for $\si^{-2}\rho^2$. Pulling this out, we see that the
three slots of are (up to sign) exactly the three objects claimed to
smoothly extend to the boundary, so these claims follow. Next, we know
that $L(\si^{-2}g^{-1})$ has rank $n+1$ with null space spanned by
$L(\si)$. This implies that $\rho_a$ lies in the null space of
$\tau^{ab}$ and in the kernel of $\la^a$ (which also follows from the
existence of the limits towards the boundary shown above). Since
$\tau^{ab}$ describes the restriction of $L(\si^{-2}g^{-1})$ to a
subspace of codimension one and is degenerate, the only possible ranks
for $\tau^{ab}$ are $n$ and $n-1$. So it remains to prove the relation
to the curved orbit decomposition.

The holonomy reduction giving rise to the curved orbit decomposition
comes from the inverse of the metric induced by $L(\si^{-2}g^{-1})$ on the
quotient of $\Cal T^*$ by the line spanned by $L(\si)$. Now this
inverse is a section of the bundle $S^2\tilde{\Cal T}^*$ of metrics on
the standard cotractor bundle $\tilde{\Cal T}$ for the induced
projective structure on the boundary. The irreducible quotient of this
is the bundle of densities of projective weight two, with the quotient
projection coming from the restriction of the metric to the natural
line subbundle in $\tilde{\Cal T}$. By theorem 3.1 of \cite{hol-red},
the curved orbit $\partial M_0$ coincides with the zero set of the
induced section of the quotient bundle. Otherwise put, a point $x_0$
lies in $\partial M_0$ if and only if the distinguished line in
$\tilde{\Cal T}$ is isotropic for the metric defining the holonomy
reduction. But this is equivalent to the fact that the dual metric is
degenerate on the annihilator of this line, which is exactly the
natural subbundle in $\tilde{\Cal T}^*$. Of course, this is equivalent
to the null--space of $\tau^{ab}$ being strictly bigger than the line
spanned by $\rho_a$.
\end{proof}

\subsection{Asymptotic form}\label{3.7}
Our last task is to show that a Ricci flat metric which is
projectively compact of order one admits an asymptotic form as
discussed in Section \ref{2.3}, at least around points in the open
curved orbits in the boundary. (In particular, this is always true if
the initial metric is Riemannian.) As we have noted in \ref{2.3}, the
asymptotic form will only be available for specific defining
functions, so we have to specialize the defining function
appropriately.

\begin{lemma}\label{lem3.7}
In the setting of Theorem \ref{thm3.6} assume that $U\cap\partial
M\subset\partial M_+\cup\partial M_-$. Then, possibly shrinking $U$,
one can modify the defining function $\rho$ to $\tilde\rho$ in such a
way that the boundary value of $\tilde\rho^{-3}g^{ij}\tilde\rho_j$
vanishes identically and the function
$\tilde\rho^{-4}g^{ij}\tilde\rho_i\tilde\rho_j$ is of the form
$\nu_0+\tilde\rho^2\nu_2$ for a non--zero constant $\nu_0$ and a smooth
function $\nu_2$ on $U$.
\end{lemma}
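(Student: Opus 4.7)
The plan is to seek $\tilde\rho$ in the form $\tilde\rho = e^f\rho$ for a smooth function $f$ on a (possibly shrunken) neighbourhood $U$. Writing $\tilde\rho_j = e^f\rho_j + \tilde\rho f_j$ and substituting, one obtains on $U\cap M$ the identities
$$
\tilde\rho^{-3}g^{ij}\tilde\rho_j = e^{-2f}\bigl(\lambda^i + \tau^{ij}f_j\bigr),\qquad
\tilde\rho^{-4}g^{ij}\tilde\rho_i\tilde\rho_j = e^{-2f}\bigl(\nu + 2\lambda^af_a + \tau^{ab}f_af_b\bigr).
$$
The vanishing of the first expression on $\partial M$ decouples: the $\rho$-component is automatic since both $\tau^{ij}\rho_j$ and $\lambda^i\rho_i$ vanish on the boundary, while the tangential component reduces to $\tau^{\mu\nu}|_{\partial M}\partial_\nu\bar f = -\lambda^\mu|_{\partial M}$ with $\bar f := f|_{\partial M}$. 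On $\partial M_\pm$, Theorem \ref{thm3.6}(i) guarantees that $\tau^{ab}|_{\partial M}$ has rank $n$ with null space $\langle\rho_b\rangle$, so this uniquely determines $d\bar f$ as a $1$-form on $U\cap\partial M$.

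For $\bar f$ to exist, the prescribed $1$-form must be closed. I would argue this via the descended parallel tractor $\bar H\in\Ga(S^2\tilde{\Cal T})$ on $\partial M$ supplied by Theorem \ref{thm3.5}: in the distinguished Einstein scale $\bar\si$ from the holonomy reduction, the middle slot of $\bar H$ vanishes. Applying \eqref{eq:S2*-trans} to pass to the boundary scale $\hat\si|_{\partial M} = e^\phi\bar\si$, this middle slot becomes linear in $d\phi$; comparing with the ambient description of the same slot, which is a non-vanishing scalar multiple of $\lambda^\mu|_{\partial M}$, identifies $\lambda^\flat|_{\partial M}$ as a non-vanishing scalar multiple of an exact $1$-form, so in particular closed. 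After a further shrink of $U$ one picks any primitive $\bar f$, extends it arbitrarily into $U$, and sets $\tilde\rho:=e^f\rho$; by construction the first condition of the lemma holds.

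The heart of the proof is to show that once this first condition is satisfied, the smooth function $F := \tilde\rho^{-4}g^{ij}\tilde\rho_i\tilde\rho_j$ on $\barm$ automatically has $dF|_{\partial M} = 0$. Set $\tilde{\hat\si}:=\si/\tilde\rho = e^{-f}\hat\si\in\Ga(\Cal E(1))$, smooth and nowhere-zero on $\barm$. A direct bilinear pairing in the splitting of $\hat\nabla$ yields the identity $F = H(L(\tilde{\hat\si}),L(\tilde{\hat\si}))$. Parallelism of $H$ gives $\nabla_a F = 2H(\nabla^{\Cal T^*}_a L(\tilde{\hat\si}),L(\tilde{\hat\si}))$. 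In the splitting of $\hat\nabla$ one has $L(\tilde{\hat\si}) = \binom{\tilde{\hat\si}}{-f_a\tilde{\hat\si}}$, so by \eqref{eq:std-conn} the top slot of $\nabla^{\Cal T^*}_a L(\tilde{\hat\si})$ vanishes (the defining property of the splitting operator), while the bottom slot takes the form $K_{ab}\tilde{\hat\si}$ for a specific tensor $K_{ab}$. Substituting into the bilinear form of $H = \hat\si^{-2}(\tau^{ab},-\lambda^a,\nu)$ and simplifying produces
$$
H\bigl(\nabla^{\Cal T^*}_a L(\tilde{\hat\si}),\,L(\tilde{\hat\si})\bigr) = -e^{-2f}K_{ab}\bigl(\tau^{bc}f_c + \lambda^b\bigr),
$$
and the factor in parentheses vanishes on $\partial M$ by the first condition. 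Carrying out this pairing cleanly and verifying this cancellation is the main technical obstacle of the argument.

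From $dF|_{\partial M} = 0$ one extracts simultaneously that the tangential derivatives of $F$ vanish along the boundary (so after a further shrink of $U$, $F|_{\partial M}$ equals a single constant $\nu_0$) and that the transverse derivative $\partial_{\tilde\rho}F|_{\partial M}$ also vanishes; smoothness of $F$ and Taylor's theorem in $\tilde\rho$ then force $F = \nu_0 + \tilde\rho^2\nu_2$ for a smooth $\nu_2$. Finally, the non-vanishing of $\nu_0$ is exactly the condition of being in an open orbit: direct substitution in the boundary value formula gives $\nu_0 = e^{-2\bar f}(\nu - h_{\mu\nu}\lambda^\mu\lambda^\nu)|_{\partial M}$ with $h$ the inverse of $\tau^{\mu\nu}|_{\partial M}$, and this expression is (up to density factors) the Schur complement of $\bar H$ relative to the sub of $\tilde{\Cal T}^*$. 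It is non-zero precisely when $\bar H$ restricts non-degenerately to the sub, which by Theorem \ref{thm3.6}(i) characterizes $\partial M_+\cup\partial M_-$.
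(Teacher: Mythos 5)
Your proposal is correct in substance and shares the paper's overall architecture: the ansatz $\tilde\rho=e^f\rho$, the boundary equation $\tau^{\mu\nu}f_\nu=-\la^\mu$ (solvable on the open orbits since $\tau^{ij}$ has rank $n$ with null space spanned by $\rho_j$), the observation that $dF|_{\partial M}=0$ yields first a constant boundary value and then, by the $\nu_1=\tilde\rho\nu_2$ argument, the form $\nu_0+\tilde\rho^2\nu_2$, and the Schur-complement identification of $\nu_0$. Where you genuinely diverge is the integrability of $f$. The paper proves closedness of the prescribed one-form $\ph$ directly: using parallelism of $L(\si^{-2}g^{-1})$ in the form \eqref{tauder}--\eqref{muder}, it shows $\tau^{ai}\tau^{bj}\hat\nabla_i\ph_j$ is symmetric in $a,b$, hence $d\ph=\ps\wedge d\rho$, and then integrates on the boundary. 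You instead extract \emph{exactness} from the holonomy reduction of Theorem \ref{thm3.5}: in the Einstein scale on an open curved orbit the middle slot of the descended tractor vanishes, and \eqref{eq:S2*-trans} identifies $\la^a$ with $\tau^{ab}\Up_b$ for $\Up=d\log$ of the ratio of the two scales (note the density prefactors multiply top and middle slots equally, so no non-constant scalar spoils exactness). This is a more structural route that even spares the Poincar\'e lemma, but it has one soft spot you only assert: that the boundary values of the ambient slots of $L(\si^{-2}g^{-1})$ in the $\hat\nabla$-splitting coincide with the intrinsic slots of the descended form on $\partial M$ in the induced splitting. This compatibility uses total geodesy of the boundary and the identifications underlying \cite[Theorem 3.1]{ageom}; it is true and standard, but it needs to be spelled out for the argument to be complete.

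Conversely, the step you flag as the ``main technical obstacle'' is not one. Compute $dF$ in the splitting of the connection $\tilde\nabla=\nabla+\tfrac{d\tilde\rho}{\tilde\rho}$ rather than $\hat\nabla$: there $L(\tilde{\hat\si})=\binom{\tilde{\hat\si}}{0}$ and $\nabla^{\Cal T^*}_aL(\tilde{\hat\si})=\binom{0}{\tilde\Rho_{ab}\tilde{\hat\si}}$, so the pairing collapses to $\nabla_aF=-2\tilde\Rho_{ab}\tilde\rho^{-3}g^{bc}\tilde\rho_c$, which is precisely the analogue of \eqref{nuder} that the paper invokes at the same point; the right-hand side vanishes on $\partial M$ by your first condition. (Your $\hat\nabla$-splitting version with $K_{ab}=-\hat\nabla_af_b+f_af_b+\hat\Rho_{ab}$ also checks out and gives the stated cancellation.) Finally, for $\nu_0\neq0$ you should cite, as the paper does, that the parallel section $L(\si^{-2}g^{-1})$ has rank $n+1$ \emph{everywhere}; Theorem \ref{thm3.6}(i) alone gives only $\operatorname{rank}\tau^{ab}=n$ on the open orbits, and it is the conjunction of the two ranks that forces your Schur complement $e^{-2\bar f}(\nu-h_{\mu\nu}\la^\mu\la^\nu)|_{\partial M}$ to be non-zero.
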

\begin{proof}
By assumption, the boundary value $\tau^{ij}$ of $\rho^{-2}g^{ij}$
satisfies $\tau^{ij}\rho_j=0$ and it has rank $n$. This implies that
viewing $\tau^{ij}$ as a map from the cotangent space to the tangent
space, its image will be the full annihilator of $\rho_j$. Since the
boundary value $\la^i$ of $\rho^{-3}g^{ij}\rho_j$ satisfies
$\la^i\rho_i=0$, we see that there is a one--form $\ph_j$ such that
$\tau^{ij}\ph_j=-\la^i$ and $\ph_j$ is actually unique up to adding a
some function times $\rho_j$. 

Now denoting by $\hat\nabla$ the connection in the projective class
corresponding to the defining function $\rho$, we can determine what
the fact that $L(\si^{-2}g^{-1})$ is parallel means in the splitting
determined by $\hat\nabla$. Using the formula for the tractor
connection on $S^2\Cal T$ \cite[formula (7)]{CGM} and the fact that
$\si^{-2}\rho^2$ is parallel for $\hat\nabla$, we get
\begin{align}
\hat\nabla_a(\rho^{-2}g^{ij})&=\rho^{-3}\delta^i_ag^{jk}\rho_k+\rho^{-3}\delta^j_ag^{ik}\rho_k.\label{tauder}\\
\hat\nabla_a(\rho^{-3}g^{ik}\rho_k)&=\rho^{-4}\delta^i_ag^{jk}\rho_j\rho_k-\rho^{-2}\hat\Rho_{aj}g^{ij}.\label{muder}\\
\hat\nabla_a\rho^{-4}g^{jk}\rho_j\rho_k&=-2\rho^{-3}\hat\Rho_{aj}g^{jk}\rho_k.\label{nuder}
\end{align}
Now we use this to compute
$\tau^{ai}\tau^{bj}\hat\nabla_i\ph_j=\tau^{ai}\hat\nabla_i\tau^{bj}\ph_j-
\tau^{ai}\ph_j\hat\nabla_i\tau^{bj}$. In the first summand, we just
get a linear combination of $\tau^{ab}$ and
$\tau^{ai}\hat\Rho_{ij}\tau^{jb}$. Since $\hat\nabla$ is a special
affine connection, $\hat\Rho_{ij}$ is symmetric, so both this terms
are symmetric in $a$ and $b$. From the second summand, we get
$-\tau^{ai}\ph_j\delta^b_i\la^j-\tau^{ai}\ph_j\delta^j_i\la^b$ so this
again adds a multiple of $\tau^{ab}$ plus $\la^a\la^b$. Thus we
conclude that $\tau^{ai}\tau^{bj}\hat\nabla_i\ph_j$ is symmetric in
$a$ and $b$, or otherwise put, the alternation of $\hat\nabla_i\ph_j$,
which equals $d\ph$ as $\hat\nabla$ is torsion free, contracts
trivially into $\tau^{ai}\tau^{bj}$. Since $\rho_j$ spans the null
space of $\tau^{ab}$, this implies that $d\ph=\ps\wedge d\rho$ for
some one--form $\ps$. In particular, this shows that the restriction
of $\ph$ to a one--form on $\partial M$ is closed. Possibly shrinking
$U$, we may assume that there is a smooth function $f:U\cap\partial
M\to\Bbb R$ such that $\ph|_{\partial M}=df$. Extending $f$
arbitrarily to $U$, we conclude that the one--form $f_i:=df$ has the
property that $\tau^{ij}f_j=-\la^i$.

Now we define $\tilde\rho:=e^f\rho$, which implies that
$\tilde\rho_j=\tilde\rho f_j+e^f\rho_j$. Thus we get 
$$
\tilde\rho^{-3}g^{ij}\tilde\rho_j=\tilde\rho^{-2}g^{ij}f_j+
e^f\tilde\rho^{-3}g^{ij}\rho_j=e^{-2f}(\rho^{-2}g^{ij}f_j+\rho^{-3}g^{ij}\rho_j), 
$$ and the term in the bracket goes to zero at the boundary by
construction. Next, consider
$\tilde\rho^{-4}g^{ij}\tilde\rho_i\tilde\rho_j=:\nu$. The analog of
\eqref{nuder} for $\tilde\rho$ implies that $\tilde\nabla_a\nu$
vanishes identically along the boundary. In particular, $\nu$ equals
some constant $\nu_0$ along the boundary, and this constant must be
non--zero, since we know that $L(\si^{-2}g^{-1})$ has rank $n+1$
everywhere. Hence $\nu-\nu_0$ vanishes along the boundary and thus is
 of the
form $\tilde\rho\nu_1$ for some function $\nu_1$ which is smooth up to
the boundary. Differentiating, we get $d\nu=\tilde\rho
d\nu_1+\nu_1d\tilde\rho$. But we know that $d\nu$ vanishes identically
along the boundary, so inserting a vector field $\xi$ such that
$d\tilde\rho(\xi)=1$, we see that $\nu_1$ vanishes along the boundary
and thus can be written as $\tilde\rho\nu_2$ for a function $\nu_2$
which is smooth up to the boundary.
\end{proof}

\begin{prop}\label{prop3.7}
In the setting of Theorem \ref{thm3.6} assume that the defining function
$\rho$ has the additional properties derived for $\tilde\rho$ in Lemma
\ref{lem3.7} above. Then putting $\nu=\rho^{-4}g^{ab}\rho_a\rho_b$, the
tensor field $h=\rho^2 g+\tfrac{1}{\nu}\tfrac{d\rho\odot
  d\rho}{\rho^2}$ satisfies the hypothesis of Theorem \ref{thm2.3} (for
$\al=1$).
\end{prop}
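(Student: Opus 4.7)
The plan is to verify the two hypotheses of Theorem \ref{thm2.3} directly in local coordinates adapted to $\rho$. First I would choose coordinates $(\rho, x^1, \ldots, x^n)$ so that $\rho_a = \delta^0_a$ and translate the conclusions of Theorem \ref{thm3.6} together with the normalizations from Lemma \ref{lem3.7} into an explicit block decomposition of the inverse metric: $g^{00} = \rho^4 \nu$ with $\nu = \nu_0 + \rho^2\nu_2$ and $\nu_0\neq 0$ a constant; $g^{0i} = \rho^4\mu^i$ for smooth $\mu^i$ (using that the full vector $\lambda^a$, and not merely $\lambda^a\rho_a$, vanishes on $\partial M$); and $g^{ij} = \rho^2 T^{ij}$ with $T^{ij}$ smooth up to the boundary and $T^{ij}|_{\partial M}$ nondegenerate (by Theorem \ref{thm3.6}(i), since $\rho_a$ spans the null space of the boundary value of $\rho^{-2}g^{ab}$, so the purely tangential block is nondegenerate).

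The central computation is then a Schur-complement inversion of this block matrix. Setting $\tilde\nu := \nu - \rho^2\mu^i T_{ij}\mu^j$ with $T_{ij}$ the inverse of $T^{ij}$, one finds $g_{00} = \rho^{-4}/\tilde\nu$, $g_{0i}$ of order $\rho^{-2}$, and $g_{ij} = \rho^{-2}T_{ij}$ plus terms smooth up to the boundary. Hence $\rho^2 g_{ab}$ is already smooth in the tangential-tangential and normal-tangential blocks (with tangential boundary value $T_{ij}|_{\partial M}$), and diverges only as $\rho^{-2}/\tilde\nu$ in the normal-normal component. The corrective term $\tfrac{1}{\nu}\tfrac{d\rho\odot d\rho}{\rho^2}$ in the definition of $h$ contributes only to this component, and the leading $\rho^{-2}$-divergence cancels, up to sign, because $\nu - \tilde\nu = \rho^2\mu^i T_{ij}\mu^j$ vanishes to order $\rho^2$; explicitly, the $h_{00}$-remainder comes out a smooth multiple of $\mu^i T_{ij}\mu^j/(\nu\,\tilde\nu)$. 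Nondegeneracy of $h|_{T\partial M}$ is then immediate from nondegeneracy of $T_{ij}|_{\partial M}$.

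For the first hypothesis of Theorem \ref{thm2.3}, namely smoothness of $\rho^{-2}\zeta\cdot C$ for every $\zeta$ with $d\rho(\zeta) = 0$, the argument is short: since $C$ is a constant multiple of $1/\nu$, it suffices to check that $\rho^{-2}\zeta\cdot\nu$ is smooth. But $\nu_0$ is a genuine \emph{constant} (this is the essential content of the second conclusion of Lemma \ref{lem3.7}) and $\zeta\cdot\rho = 0$, so $\zeta\cdot\nu = \zeta\cdot(\rho^2\nu_2) = \rho^2\zeta\cdot\nu_2$, and dividing by $\rho^2$ yields the smooth function $\zeta\cdot\nu_2$.

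The main obstacle is ensuring that the $\rho^{-2}$-divergence in $h_{00}$ cancels exactly. This requires $\tilde\nu = \nu + O(\rho^2)$, which comes from the Schur complement, but relies crucially on the off-diagonal block $g^{0i}$ being of order $\rho^4$ (not merely $\rho^3$) and on $\nu_0$ being a bona-fide constant. Both refinements are exactly what the specialization of the defining function in Lemma \ref{lem3.7} was engineered to provide; once they are in place, the rest of the proof is careful bookkeeping that tracks how this preparation makes the algebraic cancellations and the smoothness of $\rho^{-2}\zeta\cdot C$ both go through.
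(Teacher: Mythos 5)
Your proposal is correct and takes essentially the same approach as the paper: both extract from Theorem \ref{thm3.6} and Lemma \ref{lem3.7} the refined asymptotics $g^{ab}\rho_b=\rho^4\mu^a$ (smooth $\mu^a$) and $\nu=\nu_0+\rho^2\nu_2$, then invert the resulting block structure of $g^{ab}$ --- the paper via the invariant rank--one splitting $h^{ab}=\rho^{-2}g^{ab}-\tfrac{\rho^2}{\nu}t^at^b$ with $t^a=\rho^{-4}g^{ab}\rho_b$ and an orthogonal decomposition along $d\rho$, you via the equivalent Schur complement in adapted coordinates --- and your verification of the hypothesis on $C$ via $\zeta\cdot\nu=\rho^2\,\zeta\cdot\nu_2$ (using that $\nu_0$ is constant and $\zeta\cdot\rho=0$) is the paper's argument verbatim. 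Your ``up to sign'' caveat is the right call: the paper's own final formula $g_{ij}=\rho^{-2}h_{ij}+\rho^{-4}\nu^{-1}\rho_i\rho_j$ amounts to $h=\rho^2g-\tfrac{1}{\nu}\tfrac{d\rho\odot d\rho}{\rho^2}$, i.e.\ $C=1/\nu$ in \eqref{expansion}, so the plus sign in the Proposition's statement is a slip, and your remainder $h_{00}=\mu^iT_{ij}\mu^j/(\nu\tilde\nu)$ is exactly the cancellation that the paper's inversion encodes.
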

\begin{proof}
By assumption, $\rho^{-3}g^{ij}\rho_j$ goes to zero on the boundary,
so it is of the form $\rho t^i$ for some vector field $t^i$ which is
smooth up to the boundary. Moreover, by construction $t^i\rho_i=\nu$.
Now we define a tensor field $h^{ij}$ on $U$ by
\begin{equation}\label{hdef}
h^{ij}:=\tfrac{1}{\rho^2}g^{ij}-\tfrac{\rho^2}{\nu}t^it^j. 
\end{equation}
By Theorem \ref{thm3.6}, this is smooth up to the boundary and the
boundary value coincides with the one of $\rho^{-2}g^{ij}$ and thus
has rank $n$ with its null--space spanned by $\rho_j$. By definition,
$t^j\rho_j=\nu$, which shows that
$h^{ij}\rho_j=\tfrac1{\rho^2}g^{ij}\rho_j-\rho^2t^i=0$. On the other
hand, on the kernel of $t^i$, $h^{ij}$ evidently coincides with
$\tfrac1{\rho^2}g^{ij}$ so it is non--degenerate there.

Now from \eqref{hdef}, we see that
$g^{ij}=\rho^2h^{ij}+\tfrac{\rho^4}{\nu}t^it^j$. This represents an
orthogonal decomposition of $T^*M$ with respect to $g^{ij}$ into the
line spanned by $d\rho$ and the kernel of $t^i$, so the decomposition
of the space extends to the boundary. Dually, one obtains an
orthogonal decomposition for $g_{ij}$ into $\ker(d\rho)$ and the line
spanned by $t^i$. With respect to this decomposition, the metric
$g_{ij}$ then clearly  is the sum of $\rho^{-2}h_{ij}$ (where
$h_{ij}$ is the inverse of $h^{ij}$ on $\ker(d\rho)$, extended by zero
on the line spanned by $t^i$) and some multiple of
$\rho_i\rho_j$. This multiple can be computed by observing that
$g_{ij}t^it^i=\rho^{-4}\nu$, which shows that
$$
g_{ij}=\rho^{-2}h_{ij}+\rho^{-4}\nu^{-1}\rho_i\rho_j. 
$$ To complete the proof, it thus suffices to show that the function
$\nu^{-1}$ satisfies the assumptions of Theorem \ref{thm2.3}. But
$d\nu^{-1}=-\nu^{-2}d\nu$, so this vanishes identically along the
boundary. As in the proof of Lemma \ref{lem3.7}, we thus conclude that
$\nu^{-1}=(1/\nu_0)+\rho^2\tilde\nu$ for some function $\tilde\nu$
which is smooth up to the boundary. Thus $d\nu^{-1}=2\rho\tilde\nu
d\rho+\rho^2d\tilde\nu$. For a vector field $\ze$ on $U$ such that
$d\rho(\ze)=0$, we thus have $\ze\cdot\nu^{-1}=\rho^2(\ze\cdot\tilde\nu)$,
which completes the proof.
\end{proof}


\begin{thebibliography}{10}
\bibitem{AdSCFTreview}
  O.\ Aharony, S.S.\ Gubser, J.M.\ Maldacena {\it et al.}.
\newblock Large $N$ field theories, string theory and gravity.
\newblock Phys.\ Rept.\  {\bf 323} (2000), 183-386 .

\bibitem{Armstrong} S.\ Armstrong, 
\newblock Projective holonomy. I. Principles and properties.
\newblock Ann. Global Anal. Geom.,
{\bf 33} (2008), 47--69.


\bibitem{BalMatveev} A.V.\ Bolsinov, and V.S.\ Matveev, 
\newblock Splitting and gluing lemmas for geodesically equivalent
pseudo-Riemannian metrics. 
\newblock Trans. Amer. Math. Soc. \textbf{363} (2011), 4081--4107.

\bibitem{BEG} T.N. Bailey, M.G. Eastwood, A.R. Gover. 
\newblock Thomas's structure bundle for conformal, projective and related
    structures.
\newblock Rocky Mountain J. \textbf{24} (1994), 1191--1217.

\bibitem{BDE} R.\ Bryant, M.G.\ Eastwood, and M.\ Dunajski. 
\newblock  Metrisability of two-dimensional projective structures.
\newblock J. Differential Geom. \textbf{83} (2009), 465--499.

\bibitem{Chrusciel} P. Chru\'sciel, E. Delay, J.M. Lee, D.N. Skinner.
\newblock Boundary regularity of conformally compact Einstein metrics.
\newblock J.\ Differential Geom.\ \textbf{69} (2005), 111--136.

\bibitem{deformations} A.~{\v{C}}ap. 
\newblock Infinitesimal automorphisms and deformations of parabolic geometries.
\newblock J. Eur. Math. Soc. (JEMS), \textbf{10} no. 2 (2008) 415--437.

\bibitem{ageom} A.~{\v{C}}ap, A.~R. Gover, and M.~Hammerl.
 \newblock Projective {BGG} equations, algebraic sets, and compactifications of
  {E}instein geometries.
\newblock J. London Math. Soc. \textbf{86}  no. 2 (2012) 433--454.

\bibitem{hol-red}
A.~{\v{C}}ap, A.~R. Gover, and M.~Hammerl.
\newblock Holonomy reductions of Cartan geometries and curved orbit
  decompositions.
\newblock Duke Math. J. \textbf{163}, no. 5 (2014) 1035--1070.

\bibitem{CGM}
A.~\v Cap, A.R.~Gover, and H.~Macbeth.
\newblock Einstein metrics in projective geometry. 
\newblock Geom. Dedicata \textbf{168} (2014) 235--244. 

\bibitem{CSS}
A.~{\v{C}}ap, J.~Slov\'{a}k, and V.~Sou\v{c}ek.
\newblock {Bernstein-Gelfand-Gelfand sequences.}
\newblock Ann. of Math. {\bf 154} (2001), 97--113.


\bibitem{deHaro} S.\ de Haro, K.\ Skenderis, and S.N.\ Solodukhin.
\newblock {Holographic Reconstruction of Spacetime and Renormalization in
    the AdS/CFT Correspondence.}
\newblock Commun. Math. Phys. {\bf 217} (2001), 595--622.


\bibitem{EastwoodNotes} M.\ Eastwood.
\newblock Notes on projective differential geometry.  
\newblock Symmetries and overdetermined
  systems of partial differential equations, 41--60, IMA
  Vol. Math. Appl., 144, Springer, New York, 2008.

\bibitem{FGast} C.\ Fefferman, and C.R.\ Graham. 
\newblock Conformal invariants.
\newblock in: The mathematical heritage of \'{E}lie Cartan (Lyon,
  1984).  Ast\' erisque 1985, Numero Hors Serie, 95--116. 

\bibitem{FGbook} C.\ Fefferman, and C.R.\ Graham.
\newblock The Ambient Metric.
\newblock Annals of Mathematics Studies, 178. Princeton University
Press 2012. 

\bibitem{Fr} J\"{o}rg Frauendiener.  
 \newblock Conformal
  Infinity. 
\newblock Living Rev.\ Relativ., {\bf 7} (2004), 2004-1, 82
  pp.


\bibitem{Friedrich} H.\ Friedrich. 
\newblock  Conformal Einstein evolution.
\newblock The conformal structure of space-time, 1--50,
Lecture Notes in Phys., 604, Springer, Berlin, 2002. 

\bibitem{Go-al} A.R. Gover.
\newblock Almost Einstein and  Poincar\'e-Einstein manifolds in
Riemannian signature.
\newblock  J.\ Geom.\ Phys., {\bf 60} (2010), 182--204.

\bibitem{GoMac} A.R. Gover, and H.~Macbeth. 
\newblock  Detecting Einstein geodesics: Einstein metrics in 
projective and conformal geometry.
\newblock Differential Geom. Appl. \textbf{33}, Supplement (2014)
44--69. 

\bibitem{GrZ} C.R.\ Graham, and M.\ Zworski.
\newblock Scattering matrix in conformal geometry.
\newblock Invent.\ Math.\ {\bf 152} (2003), 89--118.

\bibitem{GuillamouAB} C.\ Guillarmou, and A.\ S\'{a} Barreto.
\newblock Inverse problems for Einstein manifolds.
\newblock Inverse Probl.\ Imaging, {\bf 3} (2009), 1--15. 


\bibitem{HL} G.S.\ Hall, D.P.\ Lonie.
\newblock The principle of equivalence and projective structure in
spacetimes.
\newblock Classical Quantum Gravity, {\bf 24} (2007), {\bf 14}, 3617--3636.

\bibitem{HL2} G.S.\ Hall, D.P.\ Lonie.
\newblock Projective equivalence of Einstein spaces in general
relativity.
\newblock Classical Quantum Gravity,  {\bf 26} (2009), 125009, 10 pp. 

\bibitem{Henningson}
  M.\ Henningson, K.\ Skenderis. 
\newblock The Holographic Weyl anomaly.
\newblock  JHEP {\bf 9807}, 023 (1998).


\bibitem{LeBrunH} C.R.~LeBrun. 
\newblock H-space with a cosmological constant. 
\newblock Proc. Roy. Soc. London Ser. {\bf A 380} (1982), 171--185.

 
\bibitem{Ma-hodge} R.~Mazzeo.
\newblock The Hodge cohomology of a conformally compact metric.
\newblock J.\ Differential Geom., {\bf 28} (1988), 309--339.


\bibitem{Melrose} R.B. Melrose.
\newblock Geometric scattering theory. Stanford Lectures.
\newblock Cambridge University Press, Cambridge, 1995. 

\bibitem{Michor} P. Michor.
\newblock Topics in Differential Geometry. Graduate Studies in
Mathematics, 93.
\newblock Amer. Math. Soc., Providence, 2008


\bibitem{Mikes} J.\ Mikes.
\newblock Geodesic mappings of affine-connected and Riemannian spaces.
\newblock Jour.\ Math.\ Sci.\ {\bf 78} (1996), 311--333.

\bibitem{NurMet} P.\ Nurowski.
\newblock Projective vs metric structures.
\newblock  J. Geom. Phys. {\bf 62} (2012),  657--674. 

\bibitem{Penrose125} R.\ Penrose.  
\newblock Zero rest-mass fields including gravitation: asymptotic behaviour,
\newblock R.\ Soc.\ London, Ser. A, {\bf 284} (1965), 159--203.

\bibitem{Vasy} A.\ Vasy.
\newblock The wave equation on asymptotically de Sitter-like spaces.
\newblock Adv.\ Math., {\bf 223} (2010), 49--97.
  
\bibitem{Witten} E.\ Witten.
\newblock  Anti de Sitter space and holography.  
\newblock Adv.\ Theor.\ Math.\ Phys.\ {\bf 2}  (1998), 253--291.
  
\end{thebibliography}
\end{document}